  \pgfplotsset{compat=newest}
\newcommand{\op}[1]{{\operatorname{#1}}}
\newcommand{\tr}{\operatorname{tr}}
\newcommand{\iprod}[2]{\langle #1 , #2 \rangle}
\newcommand{\proj}{\Pi}
\newcommand{\prox}[2]{\operatorname{prox}_{#1}(#2)}
\newcommand{\proxb}[2]{P_{#1}(#2)}
\newcommand{\argmin}{\mathop{\mathrm{arg\,min}{}}}
\newcommand{\argmax}{\mathop{\mathrm{arg\,max}{}}}
\newcommand{\norm}[1]{\|#1\|}
\newcommand{\abs}[1]{|#1|}
\newcommand{\nor}{\norm{\cdot}}
\newcommand{\bd}[1]{\mathbf{#1}}
\newcommand{\x}{\mathbf{x}}
\newcommand{\y}{\mathbf{y}}
\newcommand{\z}{\mathbf{z}}
\newcommand{\g}{\mathbf{g}}
\newcommand{\ba}{\mathbf{a}}
\newcommand{\one}{\mathbf{1}}
\newcommand{\zero}{0}
\newcommand{\eye}{I}%{\mathbf{I}}
\renewcommand{\cal}[1]{\mathcal{#1}}
\newcommand{\R}{\mathbb{R}}
\newcommand{\Sbb}{{\mathbb{S}}}
\newcommand{\st}{*}
\newcommand{\eps}{\varepsilon}
\newcommand{\Mset}{\mathscr{M}}		% the set
\newcommand{\Mmat}{M}				% the matrix
\newcommand{\Mbar}{\overline{M}}		% the fixed weight matrix
\newcommand{\comp}[1]{\widetilde{#1}}
\newcommand{\Mcomp}{\comp{\Mmat}}	% the comparison matrix
\newcommand{\Meff}{\Mset_\mathrm{eff}}
\newcommand{\Loss}{\cal{L}}
\newcommand{\Lhat}{\hat\Loss}
\title{Variational Gram Functions: Convex Analysis and Optimization\thanks{An earlier version of this work has appeared as Chapter 3 in \cite{jalali2016convex}.}}
\author{
Amin Jalali\thanks{
Optimization Theme, Wisconsin Institute for Discovery, Madison, WI (\email{amin.jalali@wisc.edu})
}
\and 
Maryam Fazel\thanks{
Department of Electrical Engineering, University of Washington, Seattle, WA 
(\mbox{\email{mfazel@uw.edu}})}
\and 
Lin Xiao\thanks{
Machine Learning Group, Microsoft Research, Redmond, WA 
(\email{lin.xiao@microsoft.com})}
}
\begin{document}
\maketitle
%\slugger{siopt}{xxxx}{xx}{x}{x--x}%slugger should be set to mms, siap, sicomp, sicon, sidma, sima, simax, sinum, siopt, sisc, or sirev
 
\begin{abstract}
We propose a new class of convex penalty functions, called \emph{variational Gram functions} (VGFs), that can promote pairwise relations, such as orthogonality, among a set of vectors in a vector space. These functions can serve as regularizers in convex optimization problems arising from hierarchical classification, multitask learning, and estimating vectors with disjoint supports, among other applications. We study convexity for VGFs, and give efficient characterizations for their convex conjugates, subdifferentials, and proximal operators. We discuss efficient optimization algorithms for regularized loss minimization problems where the loss admits a common, yet simple, variational representation and the regularizer is a VGF. These algorithms enjoy a simple kernel trick, an efficient line search, as well as computational advantages over first order methods based on the subdifferential or proximal maps. We also establish a general representer theorem for such learning problems. Lastly, numerical experiments on a hierarchical classification problem are presented to demonstrate the effectiveness of VGFs and the associated optimization algorithms. 
\end{abstract}
%\begin{keywords}\end{keywords}

 %\begin{AMS}\end{AMS}
\pagestyle{myheadings}
\thispagestyle{plain}
\markboth{JALALI, FAZEL, XIAO}{VARIATIONAL GRAM FUNCTIONS}

\section{Introduction}		\label{sec:intro}
Let $\x_1, \ldots, \x_m$ be vectors in $\R^n$. It is well known that their pairwise inner products $\x_i^T \x_j\,$, for $i, j=1,\ldots,m\,$, reveal essential information about their relative orientations, and can serve as a measure for various properties such as orthogonality. In this paper, we consider a class of functions that selectively aggregate the pairwise inner products in a variational form,
\begin{equation}
\label{eqn:vec-div-func}
\Omega_{\Mset}(\x_1,\ldots,\x_m) 
= \max_{\Mmat\in\Mset}\; \textstyle\sum_{i,j=1}^m \Mmat_{ij} \x_i^T \x_j \,,
\end{equation}
where $\Mset$ is a compact subset of the set of~$m$ by~$m$ symmetric matrices. 
Let $X=[\x_1 ~\cdots~ \x_m]$ be an $n\times m$ matrix. 
Then the pairwise inner products $\x_i^T \x_j$ are the entries of the 
Gram matrix $X^T X$ and the function above can be written as
\begin{equation}	\label{def:omega}
  \Omega_{\Mset}(X) = \max_{\Mmat\in\Mset} \;\iprod{X^T X}{\Mmat}
  = \max_{\Mmat\in\Mset}\;\tr (X \Mmat X^T)\,,
\end{equation}
where $\iprod{A}{B}=\tr(A^T B)$ denotes the matrix inner product. 
We call $\Omega_{\Mset}$ a \emph{variational Gram function} (VGF) of the 
vectors $\x_1,\ldots,\x_m$ induced by the set~$\Mset$.
If the set~$\Mset$ is clear from the context, we may write $\Omega(X)$ 
to simplify notation.

As an example, consider the case where~$\Mset$ is given by a box constraint,
\begin{align}	\label{eq:Mbox}
  \Mset = \bigl\{ \Mmat :\; |\Mmat_{ij}| \leq \Mbar_{ij},~i, j=1,\ldots,m \bigr\} ,
\end{align}
where $\Mbar$ is a symmetric nonnegative matrix. 
In this case, the maximization in the definition of $\Omega_\Mset$ picks either $\Mmat_{ij}=\Mbar_{ij}$ or $\Mmat_{ij}=-\Mbar_{ij}$ depending on the sign of $\x_i^T\x_j\,$, for all $i,j=1,\ldots,m\,$ (if $\x_i^T\x_j=0$, the choice is arbitrary). Therefore, 
\begin{equation}\label{eq:def-L1Gram}
  \Omega_\Mset(X) 
  = \max_{\Mmat\in\Mset}\; \textstyle\sum_{i,j=1}^m \Mmat_{ij} \x_i^T \x_j 
  = \textstyle\sum_{i,j=1}^m \Mbar_{ij} |\x_i^T \x_j| \,. 
\end{equation}
Equivalently, 
$\Omega_\Mset(X)$ is the weighted sum of the absolute values of pairwise inner products. 
This function was proposed in \cite{ZhouXiaoWu11} as a regularization function to promote orthogonality between selected pairs of linear classifiers in the context of hierarchical classification.

Observe that the function $\tr(X \Mmat X^T)$ is a convex quadratic function of~$X$ if~$\Mmat$ is positive semidefinite.
As a result, the variational form $\Omega_{\Mset}(X)$ is convex if $\Mset$ 
is a subset of the positive semidefinite cone $\Sbb_+^m$,
because then it is the pointwise maximum of a family of convex functions indexed by $\Mmat\in\Mset$ 
(see, e.g., \cite[Theorem~5.5]{Rockafellar70book}). 
However, this is not a necessary condition.
For example, the set~$\Mset$ in~\eqref{eq:Mbox} is not a subset of $\Sbb_+^m$
unless $\Mbar=0$, but the VGF in~\eqref{eq:def-L1Gram} is convex provided that
the \emph{comparison matrix} of $\Mbar$ (derived by negating the off-diagonal entries)
is positive semidefinite \cite{ZhouXiaoWu11}. 
In this paper, we study conditions under which different classes of VGFs are convex and provide unified characterizations for the subdifferential, convex conjugate, and the associated proximal operator for any convex VGF. 
Interestingly, a convex VGF defines a semi-norm\footnote{a semi-norm satisfies all the properties of a norm except 
that it can be zero for a nonzero input.} as 
\begin{equation} \label{def:div-norm}
\|X\|_\Mset := \sqrt{\Omega_\Mset(X)} 
  = \max_{\Mmat\in\Mset}\; \bigl(\textstyle\sum_{i,j=1}^m \Mmat_{ij} \x_i^T \x_j \bigr)^{1/2} \,.
\end{equation}
If $\Mset\subset\Sbb_+^m$, then $\|X\|_\Mset$ is the pointwise maximum of 
the semi-norms $\|{X \Mmat^{1/2}}\|_F$ over all $M\in\Mset$. 

VGFs and the associated norms can serve as penalties or regularization functions in optimization problems 
to promote certain pairwise properties among a set of vector variables
(such as orthogonality in the above example).
In this paper, we consider optimization problems of the form
\begin{align} \label{eq:loss+omega}
  \underset{X\in\R^{n\times m}}{\operatorname{minimize}} \quad \Loss(X) + \lambda\,\Omega_{\Mset}(X)\,,
\end{align}
where $\Loss(X)$ is a convex loss function of the variable 
$X=[\x_1~\cdots~\x_m]$, $\Omega(X)$ is a convex VGF,
and $\lambda>0$ is a parameter to trade off the relative 
importance of these two functions.
We will focus on problems where $\Loss(X)$ is smooth or has an explicit 
variational structure, 
and show how to exploit the structure of $\Loss(X)$ and $\Omega(X)$
together to derive efficient optimization algorithms. More specifically, we employ a unified variational representation for many common loss functions, as
\begin{equation}%	\label{eq:loss-variational}
\Loss(X) = \max_{\g\in\cal{G}}\;\; \iprod{X}{\cal{D}(\g)} - \Lhat(\g) \,,
\end{equation}
where $\Lhat :\R^p\to\R$ is a convex function, 
$\cal{G}$ is a convex and compact subset of~$\R^p$, 
and $\cal{D}:\R^p\to\R^{n\times m}$ is a linear operator. Exploiting the variational structure in both the loss function and the regularizer allows us to employ efficient primal-dual algorithms, such as mirror-prox \cite{nemirovski2004prox}, which now only require projections onto $\Mset$ and $\mathcal{G}$, instead of computing subgradients or proximal mappings for the loss and the regularizer.

Unfolding the structure for loss functions and regularizers as above, allows us to provide 
a simple preprocessing step for dimensionality reduction, presented in Section \ref{sec:reduced-form}, which can substantially reduce the per iteration cost of any optimization algorithm for \eqref{eq:loss+omega}. 
As another byproduct of these structures, we also present a general representer theorem for problems of the form \eqref{eq:loss+omega} in Section \ref{sec:repr-thm} where the optimal solution is characterized in terms of the input data in a simple and interpretable way.

\paragraph{Organization}
In Section~\ref{sec:examples}, we give more examples of VGFs and explain the
connections with functions of Euclidean distance matrices and 
robust optimization. 
Section~\ref{sec:convexity} studies the convexity of VGFs, as well as their conjugates,
semidefinite representability, corresponding norms and subdifferentials. 
Their proximal operators are derived in Section~\ref{sec:prox}.
In Section~\ref{sec:opt-algm}, we study a class of structured loss minimization
problems with VGF penalities, and show how to exploit their structure, to get an efficient optimization algorithm using a variant of the mirror-prox algorithm with adaptive line search, to use a simple preprocessing step to reduce the computations in each iteration, and to provide a characterization of the optimal solution as a representer theorem. Finally, in Section~\ref{sec:hier-class}, we present a numerical experiment on hierarchical classification to illustrate the application of VGFs.

\paragraph{Notation} 
In this paper, $\Sbb^m$ denotes the set of symmetric matrices in $\R^{m\times m}$,
and $\Sbb_+^m \subset \Sbb^m$ is the cone of positive semidefinite (PSD) matrices. 
We may omit the superscript $m$ when the dimension is clear from the context. 
The symbol $\preceq$ represents the Loewner partial order 
and $\iprod{\cdot}{\cdot}$ denotes the inner product. 
We use capital letters for matrices and bold lower case letters for vectors.
We use $X\in\R^{n\times m}$ and $\x = \text{vec}(X)\in\R^{nm}$ interchangeably,
with $\x_i$ denoting the $i$th column of $X$; 
i.e., $X=[\x_1~\cdots~\x_m]$. 
$\one$ and $\zero$ denote matrices or vectors of all ones and all  zeros respectively, whose sizes would be clear from the context.
The entry-wise absolute value of $X$ is denoted by $\abs{X}\,$. 
$\nor_p$ denotes the $\ell_p$ norm of the input vector or matrix, and $\nor_F$ denotes the Frobenius norm (similar to $\ell_2$ vector norm). 
The convex conjugate of a function~$f$ is defined as 
$f^\st(y) = \sup_{y}\; \iprod{x}{y}-f(x)\,$, and the dual norm of 
$\nor\,$ is defined as $\norm{\y}^\st = \sup\{\iprod{\x}{\y}:\; \norm{\x}\leq 1\}\,$. 
$\argmin$ ($\argmax$) returns an optimal point to a minimization (maximization) program 
while $\op{Arg\,min}$ (or $\op{Arg\,max}$) is the set of all optimal points. 
The operator $\op{diag}(\cdot)$ is 
used to put a vector on the diagonal of a zero matrix of corresponding size, to extract the diagonal entries of a matrix as a vector, or for zeroing out the off-diagonal entries of a matrix. 
We use $f\equiv g$ to denote $f(x) = g(x)$ for all $x\in\op{dom}(f) = \op{dom}(g)\,$.

\section{Examples and connections}	\label{sec:examples}
In this section, we present examples of VGFs associated to different choices of the set $\Mset$. 
The list includes some well known functions that can be expressed in the 
variational form of \eqref{eqn:vec-div-func}, as well as some new ones.
 
\paragraph{Vector norms} 
Any vector norm $\nor$ on $\R^m$ is the square root of a VGF defined by $\Mset = \{\bd{u}\bd{u}^T:\; \norm{\bd{u}}^\st\leq 1 \}$. For a column vector $\x\in\R^m\,$, the VGF is given by 
\[
\Omega_\Mset(\x^T) 
= \max_{\bd{u}}\; \{\tr(\x^T\bd{u}\bd{u}^T\x):\;  \norm{\bd{u}}^\st\leq 1 \,\} 
= \max_{\bd{u}}\; \{(\x^T\bd{u})^2:\;  \norm{\bd{u}}^\st\leq 1 \}  
= \norm{\x}^2\,.
\]

As another example for when $n=1$, consider the case where $\Mset$ is a compact convex set 
of diagonal matrices with positive diagonal entries. 
The corresponding VGF (and norm) is defined as 
\begin{equation}\label{eq:H-norm}
{\Omega_\Mset(\x^T)}= \max_{\theta\in\op{diag}(\Mset)}\, \textstyle\sum_{i=1}^m \theta_i {x_i^2} =  \norm{\x}_\Mset^2  ,
\end{equation}
and the dual norm can be expressed as $(\norm{\x}^\st)^2 = \inf_{\theta\in\op{diag}(\Mset)}\, \sum_{i=1}^m \frac{1}{\theta_i} {x_i^2}  \,$. 
This norm and its dual were first introduced in \cite{micchelli2013regularizers}, in the context of regularization for structured sparsity, and later discussed in \cite{bach2012optimization}. 
The $k$-support norm \cite{argyriou2012sparse}, which is a norm used to encourage vectors to have $k$ or fewer nonzero entries, is
a special case of the dual norm given above, corresponding to $\Mset = \{\op{diag}(\theta):\; 0\leq \theta_i\leq 1\,,\; \one^T\theta = k\}\,$. 

\paragraph{Norms of the Gram matrix}
Given a symmetric nonnegative matrix $\Mbar$, we can define a class of VGFs based on any norm $\|\cdot\|$ and its dual norm $\|\cdot\|_*$. Consider
\begin{equation}\label{eqn:Mset-norm}
  \Mset = \{ K\circ \Mbar :\; \|K\|^\st\leq 1, ~K^T=K\} ,
\end{equation}
where $\circ$ denotes the matrix Hadamard product, 
$(K\circ\Mbar)_{ij}=K_{ij}\Mbar_{ij}$ for all $i,j$.
Then,
\begin{align*}	%\label{eq:norm-Gram-vec}
  \Omega_\Mset(X) 
  = \max_{\|K\|^\st\leq 1}\; \langle K\circ\Mbar, X^T X \rangle 
  = \max_{\|K\|^\st\leq 1}\; \langle K, \Mbar\circ( X^T X) \rangle
  = \|\Mbar\circ(X^T X)\| \,.
\end{align*}
The followings are several concrete examples.
\begin{romannum}%{remunerate} % SIAM prefers not using itemize or enumerate
  \item If we let $\|\cdot\|^*$ in~\eqref{eqn:Mset-norm} be the $\ell_\infty$
    norm, then $\Mset = \{M:~ |M_{ij}/\Mbar_{ij}|\leq 1,\; i,j=1,\ldots,m\}$,
    which is the same as in~\eqref{eq:Mbox}. 
        Here we use the convention $0/0=0$, thus $M_{ij}=0$ whenever 
    $\Mbar_{ij}=0$. 
    In this case, we obtain the VGF in~\eqref{eq:def-L1Gram}:
    \[
      \Omega_\Mset(X) = \|\Mbar\circ(X^T X)\|_1 = \textstyle\sum_{i,j=1}^m \Mbar_{ij} |\x_i^T\x_j|
    \]
  \item %\emph{$\ell_2$ norm.} 
    If we use the $\ell_2$ norm in~\eqref{eqn:Mset-norm}, then
    $\Mset=\bigl\{M :~ \sum_{i,j}^m (M_{ij}/\Mbar_{ij})^2\leq 1 \bigr\}$ and 
\begin{equation} \label{eq:weighted-Fro}
      \Omega_\Mset(X) = \|\Mbar\circ(X^T X)\|_F
      = \bigl( \textstyle\sum_{i,j=1}^m (\Mbar_{ij} \x_i^T \x_j)^2 \bigr)^{1/2}.
\end{equation}
This function has been considered in multi-task learning \cite{romera2012exploiting}, and also in the context of super-saturated designs \cite{booth1962some,cheng1997s}. 
  \item %\emph{$\ell_1$ norm.} 
   Using $\ell_1$ norm in~\eqref{eqn:Mset-norm} gives $\Mset = \bigl\{M :~ \sum_{i,j}^m |M_{ij}/\Mbar_{ij}|\leq 1 \bigr\}$ and
    \begin{equation}\label{eq:weighted-Linfty}
      \Omega_\Mset(X) = \|\Mbar\circ(X^T X)\|_\infty 
      = \max_{i,j=1,\ldots,m} \Mbar_{ij} |\x_i^T \x_j| \, .
    \end{equation}
    This case can also be traced back to \cite{booth1962some} in the statistics
    literature, where the maximum of $|\x_i^T \x_j|$ for $i\neq j$
    is used as the measure to choose among supersaturated designs.
\end{romannum}%{remunerate}

Many other interesting examples can be constructed this way.
For example, one can model \emph{sharing} vs \emph{competition} using group-$\ell_1$ norm of the Gram matrix which was considered in vision tasks \cite{jayaramandecorrelating}.
We will revisit the above examples to discuss their convexity conditions in Section~\ref{sec:convexity}.

\paragraph{Spectral functions} 
From the definition, the value of a VGF is invariant under
left-multiplication of $X$ by an orthogonal matrix, but this is not true
for right multiplication.
Hence, VGFs are {\em not} functions of singular values 
(e.g., see \cite{lewis1995convex}) in general,
and are functions of the row space of $X$ as well. 
This also implies that in general $\Omega(X) \not\equiv \Omega(X^T)$. 
However, if the set $\Mset$ is closed under left and right
multiplication by orthogonal matrices,
then $\Omega_\Mset(X)$ becomes a function of squared singular values of~$X$. 
For any matrix $\Mmat\in\Sbb^m\,$, denote the sorted vector of its
singular values by $\sigma(\Mmat)$ and let 
$\Theta = \{\sigma(\Mmat):\;\Mmat\in\Mset \}$. Then we have 
\begin{align}	\label{eq:Omega-spectral}
\Omega_\Mset (X) = \max_{\Mmat\in\Mset}\;\tr(X\Mmat X^T) 
= \max_{\theta\in\Theta}\, \textstyle\sum_{i=1}^{\min(n,m)} \theta_i\, \sigma_i(X)^2 \,,
\end{align}
as a result of Von Neumann's trace inequality \cite{mirsky1975trace}. 
Note the similarity of the above to the VGF in \eqref{eq:H-norm}.  
As an example, consider 
\begin{align}	\label{eq:M-sliced-box-spectral}
\Mset=\{\Mmat :\; \alpha_1 \eye \preceq \Mmat \preceq \alpha_2 \eye, ~\tr(\Mmat)=\alpha_3\},
\end{align}
where $0< \alpha_1<\alpha_2$ and $\alpha_3\in[m\alpha_1,~m\alpha_2]$ are given constants. 
The so called \emph{spectral box-norm} \cite{mcdonald2014new} is the dual to the norm of the form~\eqref{def:div-norm} defined via this $\Mset\,$. 
{Note that in this case, $\Mset\subset\Sbb_+^m\,$, so it is easy to see that $\Omega_\Mset$ is convex.}
The square of this norm has been considered in \cite{jacob2008clustered} for clustered multitask learning where it is presented as a convex relaxation for $k$-means. 

\paragraph{Finite set $\Mset\,$}  
For a finite set $\Mset=\{\Mmat_1,\ldots,\Mmat_p\}\subset\Sbb_+^m\,$, the VGF is given by
\[
\Omega_\Mset(X) = \max_{i=1,\ldots,p}\; \norm{X\Mmat_i^{1/2}}_F^2 \,,
\]
i.e., 
the pointwise maximum of a finite number of squared weighted Frobenius norms.

In the following subsections, we consider classes of VGFs which can be used in promoting diversity, have connections to Euclidean distance matrices, or can be interpreted under a robust optimization framework.

\subsection{Diversification}	\label{sec:diversity}
Certain VGFs can be used for {\em diversifying} certain pairs of columns of the input matrix; e.g., minimizing \eqref{eq:def-L1Gram} 
pushes to zero the inner products $\x_i^T\x_j$ corresponding to the nonzero entries in $\Mbar$ as much as possible. 
As another example, observe that two non-negative vectors have disjoint supports if and only if they are orthogonal to each other. Hence, using a VGF as \eqref{eq:def-L1Gram}, $  \Omega_\Mset(X) = \sum_{i,j=1}^m \Mbar_{ij} |\x_i^T \x_j|\,$, that promotes orthogonality, we can define
\begin{align}	\label{eq:def-Omega-abs-X}
\Psi(X) = \Omega_\Mset(\abs{X}) 
\end{align}
to promote disjoint supports among certain columns of $X$; hence diversifying the supports of columns of $X$.  
Convexity of \eqref{eq:def-Omega-abs-X} is discussed in Section \ref{sec:VGF+abs}. 
Different approaches has been used in machine learning applications for promoting diversity; e.g., see\cite{malkin2008ratio,kulesza2012determinantal,iyer2015submodular} and references therein.

\subsection{Functions of Euclidean distance matrix}
Consider a set $\Mset\subset\Sbb^m$ with the property that $\Mmat\one=\zero$
for all $\Mmat\in\Mset$. For every $\Mmat\in\Mset$, let $A =\op{diag}(\Mmat) - \Mmat$ and observe that
\[
\tr(X \Mmat X^T) = \textstyle\sum_{i,j=1}^m \Mmat_{ij}\x_i^T\x_j = \tfrac{1}{2}\textstyle\sum_{i,j=1}^m A_{ij} \norm{\x_i-\x_j}_2^2 \,.
\]
This allows us to express the associated VGF as a function of the 
{\em Euclidean distance matrix $D$}, which is defined by 
$D_{ij} = \tfrac{1}{2} \norm{\x_i-\x_j}_2^2\,$ for $i,j=1,\ldots,m$
(see, e.g., \cite[Section~8.3]{boyd2004convex}).
Let $\mathcal{A} = \{\op{diag}(\Mmat) - \Mmat :\; \Mmat\in\Mset \}\,$. 
Then we have
\[
\Omega_\Mset(X) = \max_{\Mmat\in\Mset}\; \tr(X\Mmat X^T) = \max_{A\in\mathcal{A}}\; \iprod{A}{D} \,.
\]
A sufficient condition for the above function to be convex in $X$ is that 
each $A\in\cal{A}\,$ is entrywise nonnegative, which implies that
the corresponding $\Mmat=\op{diag}(A\one)-A$ is diagonally dominant with nonnegative diagonal elements, hence positive semidefinite. However, this is not a necessary condition and $\Omega_\Mset$ can be convex without all $A$'s being entrywise nonnegative. 

\subsection{Connection with robust optimization}
The VGF-regularized loss minimization problem has the following connection to robust optimization
(see, e.g., \cite{BentalElGhaouiNemirovski2009}): 
the optimization program 
\[
\underset{X}{\operatorname{minimize}}\;\,\max_{\Mmat\in\Mset} \;  \Loss(X) +  \tr(X \Mmat X^T) 
\]
can be interpreted as seeking an $X$ with minimal worst-case value over
an uncertainty set~$\Mset$.
Alternatively, when $\Mset\subset \Sbb^m_+\,$, this can be viewed as a problem with Tikhonov regularization
$\norm{X \Mmat^{1/2}}_F^2$ where the weight matrix $\Mmat^{1/2}$
is subject to errors characterized by the set $\Mset\,$.

We close this section by pointing out that VGFs are different from Quadratic Support Functions introduced in \cite{Aravkin2013:QS}. A closer notion to a VGF is the support functionals studied independently in \cite{burke2015matrix} which correspond to VGFs associated to affine sets $\Mset$, while also allowing for inhomogeneous quadratic functions in their definition.

\section{Convex analysis of VGF}	\label{sec:convexity}

In this section, we study the convexity of VGFs, their conjugate functions and subdifferentials, as well as the related norms.  

First, we review some basic properties. Notice that 
$\Omega_\Mset$ 
is the \emph{support function} of the set $\Mset$ at the Gram matrix $X^TX$; i.e.,
\begin{align}\label{eqn:supp-function}
\Omega_\Mset(X) = \max_{\Mmat\in\Mset}\, \tr(X\Mmat X^T) =S_\Mset(X^TX) 
\end{align}
where the support function of a set $\Mset$ is defined as $S_\Mset(Y) = \sup_{\Mmat\in\Mset}\, \iprod{\Mmat}{Y}\,$
(see, e.g., \cite[Section~13]{Rockafellar70book}).
By properties of the support function (see \cite[Section~15]{Rockafellar70book}), 
\begin{align*}
\Omega_\Mset \equiv \Omega_{\op{conv}(\Mset)} \,,
\end{align*}
where $\op{conv}(\Mset)$ denotes the convex hull of $\Mset\,$.
It is clear that the representation of a VGF (i.e., the associated set $\Mset$) is not unique. 
Henceforth, without loss of generality we assume $\Mset$ is convex 
unless explicitly noted otherwise. 
Also, for simplicity we assume $\Mset$ is a compact set, while all we need is that the maximum in \eqref{eqn:vec-div-func} is attained. For example, a non-compact $\Mset$ that is unbounded along any negative semidefinite direction is allowed. Lastly, we assume $0 \in \Mset$. 

Moreover, VGFs are left unitarily invariant; for any $Y\in \R^{n\times m}$ and any orthogonal matrix $U\in \R^{n\times n}$, where $UU^T=U^TU=\eye$, we have $\Omega(Y) = \Omega(UY)$ and $\Omega^*(Y) = \Omega^*(UY)$; use \eqref{def:omega} and \eqref{eq:conj-pseudo}. We use this property in simplifying computations involving VGFs (such as proximal mapping calculations in Section \ref{sec:prox}) as well as in establishing a general kernel trick and representer theorem in Section \ref{sec:reduced-form}.

As we mentioned in the introduction, a sufficient condition for the convexity of a VGF is that $\Mset\subset\Sbb_+^m$. 
In Section~\ref{sec:polytope}, we discuss more concrete conditions for 
determining convexity when the set~$\Mset$ is a polytope.
In Section~\ref{sec:sufficient}, we describe a more tangible 
sufficient condition for general sets.

\subsection{Convexity with polytope $\Mset$} \label{sec:polytope}

Consider the case where $\Mset$ is a polytope with $p$ vertices, i.e., $\Mset=\op{conv}\{\Mmat_1,\ldots,\Mmat_p\}\,$. 
The support function of this set is given as $S_\Mset(Y) =\max_{i=1,\ldots,p}\,\iprod{Y}{\Mmat_i}\,$ and is piecewise linear
\cite[Section 8.E]{rockafellar2009variational}. 
For a polytope $\Mset$, we define $\Meff$ as a subset of $\{\Mmat_1,\ldots,\Mmat_p\}$ 
with the smallest possible size satisfying 
$S_\Mset(X^TX) = S_{\Meff}(X^TX)$ for all $X\in\R^{n\times m}$.

As an example, for
$\Mset=\{\Mmat:\; \abs{\Mmat_{ij}}\leq \Mbar_{ij}, ~i,j=1,\ldots,m \}$ 
which gives the function defined in \eqref{eq:def-L1Gram}, we have
\begin{align}\label{eq:Meff-L1}
\Meff \subseteq \{\Mmat:\; \Mmat_{ii} = \Mbar_{ii}\;,\;  \Mmat_{ij} = \pm \Mbar_{ij}\;\text{for }i\neq j\, \} \,.
\end{align}
Whether the above inclusion holds with equality or not depends on $n$.

\begin{theorem}	\label{thm:polytope}
For a polytope $\Mset\subset\Sbb^{m}$, the associated VGF is convex if and only if $\Meff\subset\Sbb_+^m$. 
\end{theorem}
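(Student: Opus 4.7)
My plan is to handle the two directions separately: sufficiency is essentially immediate, while necessity proceeds by contrapositive and hinges on localizing $\Omega_\Mset$ to a single indefinite quadratic by exploiting the minimality of $\Meff$.

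For sufficiency, I would use \eqref{eqn:supp-function} together with the defining property of $\Meff$ to write $\Omega_\Mset(X) = S_{\Meff}(X^TX) = \max_{\Mmat \in \Meff}\,\tr(X\Mmat X^T)$. If each $\Mmat \in \Meff$ lies in $\Sbb_+^m$, then each $\tr(X\Mmat X^T)$ is a convex quadratic in $X$, so $\Omega_\Mset$ is a pointwise maximum of convex functions and hence convex.

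For necessity, assume some $\Mmat_0 \in \Meff$ is not PSD. The minimality of $\Meff$ implies that deleting $\Mmat_0$ must strictly decrease $S_{\Meff}(X^TX)$ at some Gram matrix, so there exists $X_0$ with $\tr(X_0 \Mmat_0 X_0^T) > \tr(X_0 \Mmat X_0^T)$ for every $\Mmat \in \Meff \setminus \{\Mmat_0\}$. Since $\Meff$ is finite and each $X \mapsto \tr(X\Mmat X^T)$ is continuous, this strict inequality persists on an open neighborhood $U$ of $X_0$, and therefore $\Omega_\Mset(X) = \tr(X\Mmat_0 X^T)$ for all $X \in U$.

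Since $\Mmat_0 \notin \Sbb_+^m$, I would pick $h \in \R^m$ with $h^T \Mmat_0 h < 0$ and let $H \in \R^{n\times m}$ have $h^T$ as its first row and zeros elsewhere, so that $\tr(H \Mmat_0 H^T) = h^T \Mmat_0 h < 0$. For $|t|$ small enough to keep $X_0 + tH \in U$, the map $t \mapsto \Omega_\Mset(X_0 + tH)$ agrees with the univariate quadratic $\tr((X_0 + tH)\Mmat_0(X_0 + tH)^T)$, whose leading coefficient $\tr(H \Mmat_0 H^T)$ is negative; that is strictly concave in $t$ and not convex, contradicting the assumed convexity of $\Omega_\Mset$. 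The main obstacle is the localization step: everything else reduces to second-derivative bookkeeping, whereas the open-neighborhood argument is what genuinely uses the minimality of $\Meff$ as a subset of vertices (rather than merely the generating property that defines $\Mset$).
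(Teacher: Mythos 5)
Your proof is correct and follows essentially the same route as the paper's: sufficiency as a pointwise maximum of convex quadratics, and necessity by using minimality of $\Meff$ to produce a point where a single non-PSD vertex is the strict unique maximizer, so that $\Omega_\Mset$ agrees on a neighborhood with an indefinite quadratic. The only cosmetic difference is in the last step: the paper notes the local Hessian $\Mmat_i\otimes \eye_n$ must be PSD, while you exhibit an explicit line along which the function is strictly concave --- these are equivalent.
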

\begin{proof}
Obviously, $\Meff\subset\Sbb_+^m$ ensures convexity of $\max_{\Mmat\in\Meff} \tr(X\Mmat X^T) = \Omega_\Mset(X)$. 
Next, we prove necessity of this condition for any $\Meff$. 
Take any $\Mmat_i\in\Meff$. If for every
$X\in\R^{n\times m}$ with $\Omega(X)=\tr(X\Mmat_i X^T)$ there exists another
$\Mmat_j\in\Meff$ with $\Omega(X)=\tr(X\Mmat_j X^T)$, then
$\Meff \backslash \{\Mmat_i\}$ is an effective subset of
$\Mset$ which contradicts the minimality of $\Meff$. Hence, there
exists $X_i$ such that $\Omega(X_i) = \tr(X_i\Mmat_i X_i^T) > \tr(X_i\Mmat_j
X_i^T)$ for all $j\neq i$. 
Hence, 
$\Omega$ is twice continuously differentiable in a small
neighborhood of $X_i$ with Hessian 
$\nabla^2\Omega(\op{vec}(X_i)) = \Mmat_i \otimes \eye_n$,
where $\otimes$ denotes the matrix Kronecker product.
Since $\Omega$ is assumed to be convex, the Hessian has to
be PSD which gives $\Mmat_i\succeq \zero$.  
\end{proof}
 
Next we give a few examples to illustrate the use of Theorem~\ref{thm:polytope}.

\begin{romannum}%{remunerate}
\item 
We begin with the example defined in~\eqref{eq:def-L1Gram}. 
Authors in \cite{ZhouXiaoWu11} provided the necessary (when $n\geq m-1$) and
sufficient condition for convexity using results from M-matrix theory:
First, define the comparison matrix $\Mcomp$ associated to the nonnegative 
matrix $\Mbar$ as $\Mcomp_{ii} = {\Mbar_{ii}}$ and
$\Mcomp_{ij} = - {\Mbar_{ij}}$ for $i\neq j\,$. 
Then $\Omega_\Mset$ is convex if $\Mcomp$ is positive semidefinite, 
and this condition is also necessary when $n\geq m-1$ \cite{ZhouXiaoWu11}.
Theorem~\ref{thm:polytope} provides an alternative and more general proof. 
Denote the minimum eigenvalue of a symmetric
matrix~$M$ by $\lambda_\mathrm{min}(M)$. 
From \eqref{eq:Meff-L1}  we have
\begin{align}	
\min_{\Mmat\in\Meff}  \lambda_{\min}(\Mmat) 
&= \min_{\substack{\Mmat\in\Meff\\ \norm{\z}_2=1}} \z^T\Mmat \z 
~\geq~ \min_{\norm{\z}_2=1} \sum_{i}\Mbar_{ii} z_i^2 - \sum_{i\neq j}\Mbar_{ij} \abs{z_iz_j} \nonumber \\
&= \min_{\norm{\z}_2=1} \abs{\z}^T\Mcomp \abs{\z} 
~\geq~ \lambda_{\min}(\Mcomp). 
\label{eq:eigen-Meff-L1}
\end{align}
When $n\geq m-1$, one can construct $X\in\R^{n\times m}$ such that all off-diagonal entries of $X^TX$ are negative (see the example in Appendix A.2 of \cite{ZhouXiaoWu11}). On the other hand, Lemma 2.1(2) of \cite{cai2013note} states that the existence of such a matrix implies $n\geq m-1$. Hence, $\Mcomp \in \Meff$ if and only if $n\geq m-1$. 
Therefore, both inequalities in~\eqref{eq:eigen-Meff-L1} should hold with equality, which means that $\Meff\subset\Sbb_+^m$ if and only if $\Mcomp\succeq 0$.
By Theorem \ref{thm:polytope}, this is equivalent to the VGF in~\eqref{eq:def-L1Gram} being convex. 
If $n<m-1$, then $\Meff$ may not contain $\Mcomp$, thus $\Mcomp\succeq 0$ is only a ``sufficient'' condition for convexity for general $n$.

\begin{figure}[htbp]
\begin{center}
\begin{tikzpicture}[scale=.7]
\begin{axis}[scale only axis, view={143}{10}, hide axis]
\pgfplotsset{
    mycone/.style = {
		colormap={violet}{[1cm] rgb255(0cm)=(25,25,122) color(1cm)=(white) rgb255(5cm)=(238,140,238)},
		fill = red,%draw=black,
		surf,
		shader=interp,opacity=1,z buffer=sort,	mesh/rows=101, 
		}
}

\newcommand{\drawArrows}{
		\addplot3 [->,color=black,solid,dashed] coordinates{(-0.0015,-0.0015,-0.0015) (1.4985,-0.0015,-0.0015)};
		\addplot3 [->,color=black,solid,dashed] coordinates{(-0.0015,-0.0015,-0.0015) (-0.0015,1.2,-0.0015)};%(-0.0015,0.8985,-0.0015)};
		\addplot3 [->,color=black,solid,dashed] coordinates{(-0.0015,-0.0015,-0.0015) (-0.0015,-0.0015,1.2985)};
		
		\addplot3 [->,color=black,solid,thick] coordinates{(.5,-0.0015,-0.0015) (1.4985,-0.0015,-0.0015)};
		\addplot3 [->,color=black,solid,thick] coordinates{(-0.0015,.4,-0.0015) (-0.0015,1.2,-0.0015)};
		\addplot3 [->,color=black,solid,thick] coordinates{(-0.0015,-0.0015,.5) (-0.0015,-0.0015,1.2985)};
}

\newcommand{\drawBox}{
		\addplot3[opacity=1,area legend,solid,
			colormap/violet, 
			line width=20pt,table/row sep=crcr,
			patch,patch type=rectangle,%forget plot,
			shader=interp,patch table with point meta={%draw=black,
		0	1	2	3	12\\ % bottom
		4	5	6	7	2\\ % * top
		2	3	7	6	2\\ % * front right
		0	1	5	4	1\\ % back left
		1	2	6	5	2\\ % * front left
		0	3	7	4	1\\ % back right
		}]
		table[row sep=crcr] {%
		x	y	z\\
		-0.5	-0.4	-0.5\\
		-0.5	0.4	-0.5\\
		0.5	0.4	-0.5\\
		0.5	-0.4	-0.5\\
		-0.5	-0.4	0.5\\
		-0.5	0.4	0.5\\
		0.5	0.4	0.5\\
		0.5	-0.4	0.5\\
		};
		
		% box's edges:
		\addplot3[area legend,dashed,line width=.2pt,draw=black,forget plot]
		table[row sep=crcr] {x	y	z\\-0.5	-0.4	-0.5\\0.5	-0.4	-0.5\\};
		\addplot3[area legend,dashed,line width=.2pt,draw=black,forget plot]
		table[row sep=crcr] {x	y	z\\-0.5	-0.4	-0.5\\-0.5	0.4	-0.5\\};
		\addplot3[area legend,dashed,line width=.2pt,draw=black,forget plot]
		table[row sep=crcr] {x	y	z\\-0.5	-0.4	-0.5\\-0.5	-0.4	0.5\\};
		
		\addplot3[area legend,solid,line width=1pt,draw=black,forget plot]
		table[row sep=crcr] {x	y	z\\
		-0.5	0.4	0.5\\
		-0.5	-0.4	0.5\\
		0.5	-0.4	0.5\\
		0.5	-0.4	-0.5\\
		0.5	0.4	-0.5\\
		-0.5	0.4	-0.5\\
		-0.5	0.4	0.5\\
		-0.5	0.4	0.5\\
		0.5	0.4	0.5\\
		0.5	0.4	-0.5\\
		};
		\addplot3[area legend,solid,line width=3.0pt,draw=black,forget plot]
		table[row sep=crcr] {
		x	y	z\\
		0.5	-0.4	0.5\\
		0.5	0.4	0.5\\
		};
		\addplot3[black, mark=*, only marks] coordinates {(0.5,0.4,0.5) (0.5,-0.4,0.5)};
}

\addplot3[mycone] table {PSDcone.txt};  %[point meta=\thisrow{c}]
\drawBox
\drawArrows
\addplot3[mycone] table {PSDconeBack.txt};
%\addplot3[mycone] table[point meta=\thisrow{c}] {PSDconeIn.txt};
\end{axis}
\end{tikzpicture}
\caption{The positive semidefinite cone, and the set in \eqref{eq:Mbox} defined by $\Mbar = [1,~0.8;~0.8,~1]\,$, where 
$2\times 2$ symmetric matrices are embedded into $\mathbb{R}^3\,$. 
The thick edge of the cube is the set of all points 
with the same diagonal elements as $\Mbar$ (see \eqref{eq:Meff-L1}), 
and the two endpoints constitute $\Meff\,$. Positive semidefiniteness of $\Mcomp$ is a necessary and sufficient condition for the convexity of $\Omega_\Mset:\R^{n\times 2}\to \R$ for all $n\geq m-1=1\,$.  
}
\label{fig:boxVGF}
\end{center}
\end{figure}

\item 
Similar to the set $\Mset$ above, consider a box that is not necessarily symmetric around the origin. 
More specifically, let 
$\Mset=\{\Mmat\in \Sbb^m:\; \Mmat_{ii}=D_{ii}\,,\; \abs{M-C} \leq D \}$ 
where $C$ (denoting the center) is a symmetric matrix with zero diagonal, 
and $D$ is a symmetric nonnegative matrix. 
In this case, we have 
$\Meff \subseteq \{\Mmat:\;
\Mmat_{ii}=D_{ii} \,,\; \Mmat_{ij} = C_{ij}\pm D_{ij}~
\mbox{for}~i\neq j\}$.
When used as a penalty function in applications, 
this can capture the prior information that when $\x_i^T\x_j$ is not zero, 
a particular range of acute or obtuse angles
(depending on the sign of $C_{ij}$) between the vectors is preferred. 
Similar to \eqref{eq:eigen-Meff-L1}, 
\begin{eqnarray*}
\min_{\Mmat\in\Meff} \; \lambda_{\min}(\Mmat) 
\geq \min_{\norm{\z}_2=1} \abs{\z}^T\comp{D} \abs{\z} + \z^T C \z
\geq \lambda_{\min}(\comp{D}) + \lambda_{\min}(C) ,
\end{eqnarray*}
where $\comp{D}$ is the comparison matrix associated to $D\,$. Note that $C$
has zero diagonals and cannot be PSD. Hence, a sufficient condition for
convexity of $\Omega_\Mset$ defined by an asymmetric box is that
$\lambda_{\min}(\comp{D}) + \lambda_{\min}(C) \geq 0$. 

\item 
Consider the VGF defined in \eqref{eq:weighted-Linfty}, whose associated
variational set is 
\begin{equation}\label{eq:Mset-linf}
\Mset = \{\Mmat\in\Sbb^m:\; \textstyle\sum_{(i,j):\, \Mbar_{ij}\neq0} \abs{{\Mmat_{ij}}/{\Mbar_{ij}}}\leq 1 \,,\; \Mmat_{ij}=0 \text{ if } \Mbar_{ij}=0 \} ,
\end{equation}
where $\Mbar$ is a symmetric nonnegative matrix. Vertices of $\Mset$ are matrices 
with either only one nonzero value $\Mbar_{ii}$ on the diagonal, or two nonzero off-diagonal entries at
$(i,j)$ and $(j,i)$ equal to $\tfrac{1}{2}\Mbar_{ij}$ or $-\tfrac{1}{2}\Mbar_{ij}\,$. 
The second type of matrices cannot be PSD as their diagonal is zero,  
and according to Theorem~\ref{thm:polytope}, convexity of $\Omega_\Mset$ requires these vertices do not belong to $\Meff\,$.  Therefore, the matrices in $\Meff$ should be diagonal. Hence, a convex VGF corresponding to the set~\eqref{eq:Mset-linf} has the form $\Omega(X) = \max_{i=1,\ldots,m} \Mbar_{ii}\norm{\x_i}_2^2\,$. 
To ensure such a description for $\Meff$ we need $\max\{\Mbar_{ii} \norm{\x_i}_2^2,\Mbar_{jj} \norm{\x_j}_2^2\} \geq \Mbar_{ij}\abs{\x_i^T\x_j}$ for
all $i$, $j$ and any $X\in\R^{n\times m}\,$, which is equivalent to $\Mbar_{ii}\Mbar_{jj}\geq \Mbar_{ij}^2$ for all $i,j\,$.  
This is satisfied if $\Mbar\succeq \zero\,$. 
However, positive semidefiniteness is not necessary. For example, all of three principal minors of the following matrix are nonnegative but it is not PSD: $\Mbar = [1,1,2;1,2,0;2,0,5]\not\succeq 0$.
\end{romannum}%{remunerate}
\subsection{A spectral sufficient condition} \label{sec:sufficient}
As mentioned before, it is generally not clear how to provide easy-to-check necessary and sufficient convexity guarantees for the case of non-polytope sets $\Mset$. 
However, simple sufficient conditions 
can be easily checked for certain classes of sets $\Mset$, for example spectral sets (Lemma \ref{lem:lcal-proj}). 
We first provide an example and consider a specialized approach to establish convexity, which 
illustrates the advantage of a simple guarantee as the one we present in Lemma \ref{lem:lcal-proj}. 
\begin{romannum}%{remunerate}
\item 
Consider the VGF defined in~\eqref{eq:weighted-Fro} and its associated set given in~\eqref{eqn:Mset-norm} when we plug in the Frobenius norm; i.e.,  
\[
  \Mset = \{ K\circ \Mbar :\; \|K\|_F\leq 1, ~K^T=K\} .
\]
In this case, $\Mset$ is not a polytope, but we can proceed with a similar 
analysis as in the previous subsection.
In particular, given any $X\in\R^{n\times m}$, the value of $\Omega_\Mset(X)$
is achieved by an optimal matrix 
$K_X = (\Mbar\circ X^TX) / \norm{\Mbar\circ X^TX}_F\,$. 
We observe that, 
\[
\Mbar\succeq 0
\implies 
\Mbar\circ \Mbar \succeq 0
\iff 
K_X\circ \Mbar \succeq 0 \,,~\forall X
\implies 
\Omega_\Mset~\text{is convex.}
\]
The first implication is by Schur Product Theorem \cite[Theorem 7.5.1]{horn2012matrix} and does not hold in reverse; e.g., $\Mbar \circ \Mbar = [1,1,2;1,2,3;2,3,5.01] \succeq 0$ while $\Mbar \not\succeq 0$. The second implication, from left to right, is again by Schur Product Theorem. The right to left part is by observing that for any $n\geq 1$, $X$ can always be chosen to select a principal minor of $\Mbar \circ \Mbar$. The third implication is straightforward; pointwise maximum of convex quadratics is convex. All in all, a sufficient condition for $\Omega_\Mset$ being convex is that the Hadamard square of $\Mbar$, namely $\Mbar \circ \Mbar$, is PSD. It is worth mentioning that when $\Mbar \circ \Mbar\succeq 0$, hence real, nonnegative and PSD, it is referred to as a {\em doubly nonnegative matrix}.

\end{romannum}%{remunerate}

Denote by $\Mmat_+$ the orthogonal projection of a symmetric matrix $\Mmat$ 
onto the PSD cone, which is given by the matrix formed by only positive 
eigenvalues and their associated eigenvectors of $\Mmat\,$. 
\begin{lemma}[a sufficient condition]\label{lem:lcal-proj}
$\Omega_\Mset$ is convex provided that for any $\Mmat\in\Mset$ there exists $\Mmat' \in\Mset$ such that $\Mmat_+ \preceq \Mmat'\,$. 
\end{lemma}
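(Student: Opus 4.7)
The plan is to show that the hypothesis allows us to restrict the maximization in the definition of $\Omega_\Mset$ to the PSD elements of $\Mset$ without changing the value, which immediately yields convexity as a pointwise supremum of convex quadratic forms.

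First, I would record the basic monotonicity: for any symmetric $M$, decompose $M = M_+ - M_-$ with $M_+, M_- \succeq 0$ and $M_+ M_- = 0$. For any $X \in \R^{n\times m}$, since $X^T X \succeq 0$ and $M_- \succeq 0$, we have $\tr(X M_- X^T) = \iprod{M_-}{X^T X} \geq 0$, and therefore
\[
\tr(X M X^T) \;=\; \tr(X M_+ X^T) - \tr(X M_- X^T) \;\leq\; \tr(X M_+ X^T).
\]
The same reasoning shows that whenever $A \preceq B$ (both symmetric), $\tr(X A X^T) \leq \tr(X B X^T)$.

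Next, I would use the hypothesis. Given any $M \in \Mset$, pick $M' \in \Mset$ with $M_+ \preceq M'$. Since $M_+ \succeq 0$, this forces $M' \succeq 0$, i.e.\ $M' \in \Mset \cap \Sbb_+^m$. Combining with the monotonicity step,
\[
\tr(X M X^T) \;\leq\; \tr(X M_+ X^T) \;\leq\; \tr(X M' X^T).
\]
Thus every $M \in \Mset$ is dominated pointwise (in $X$) by some element of $\Mset \cap \Sbb_+^m$, so
\[
\Omega_\Mset(X) \;=\; \max_{M \in \Mset} \tr(X M X^T) \;=\; \max_{M \in \Mset \cap \Sbb_+^m} \tr(X M X^T).
\]

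Finally, each map $X \mapsto \tr(X M X^T)$ with $M \succeq 0$ is a convex quadratic in $X$, and the pointwise supremum of a family of convex functions is convex (see, e.g., \cite[Theorem~5.5]{Rockafellar70book}). This finishes the argument. There is no real obstacle here; the only point requiring care is to notice that the hypothesis $M_+ \preceq M'$ automatically ensures $M' \succeq 0$, which is precisely what lets us discard the indefinite part of $\Mset$ without affecting $\Omega_\Mset$.
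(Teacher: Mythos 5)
Your proof is correct and follows essentially the same route as the paper's: both establish the chain $\tr(XMX^T) \le \tr(XM_+X^T) \le \tr(XM'X^T)$ and sandwich $\Omega_\Mset$ between the resulting maxima to conclude it is a pointwise maximum of convex quadratics. The only minor addition in your write-up is the explicit observation that $M_+\preceq M'$ forces $M'\succeq 0$, so you land directly on the identity $\Omega_\Mset\equiv\Omega_{\Mset\cap\Sbb_+^m}$ that the paper assumes in later sections.
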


\begin{proof}
For any $X$, 
$\tr(X \Mmat X^T) \leq \tr(X \Mmat_+ X^T)$ clearly holds. Therefore,
\[
\Omega_\Mset(X)  =\max_{\Mmat\in\Mset} \;\tr(X \Mmat X^T) 
\leq \max_{\Mmat\in\Mset} \;\tr(X \Mmat_+ X^T)\,.
\]
On the other hand, the assumption of the lemma gives 
\[
\max_{\Mmat\in\Mset} \;\tr(X \Mmat_+ X^T) 
\leq \max_{\Mmat'\in\Mset} \;\tr(X \Mmat' X^T)  = \Omega_\Mset(X) 
\]
which implies that the inequalities have to hold with equality, which implies that $\Omega_\Mset(X)$ is convex. Note that the assumption of the lemma can hold while $\Mset_+\not\subseteq \Mset\,$.
\end{proof}

On the other hand, it is easy to see that the condition in Lemma
\ref{lem:lcal-proj} is not necessary. 
Consider $\Mset=\{\Mmat\in\Sbb^2:\; \abs{\Mmat_{ij}}\leq 1\}\,$. 
Although the associated VGF is convex (because the comparison matrix is PSD), 
there is no matrix $\Mmat'\in \Mset$ satisfying $\Mmat'\succeq \Mmat_+$, where 
$\Mmat = [0,1;1,1] \in\Mset$ and $\Mmat_+ \simeq [0.44,0.72; 0.72 ,1.17]$, 
as for any $\Mmat'\in\Mset$ we have $(\Mmat'-\Mmat_+)_{22}<0\,$.

As discussed before, when $\Mset$ is a polytope, convexity of $\Omega_\Mset \equiv  \Omega_{\Meff}$ is equivalent to $\Meff \subset \Sbb_+^m$. For general sets $\Mset$, we showed that $\Mset_+ \subseteq \Mset$ is a sufficient condition for convexity. Similar to the proof of Lemma \ref{lem:lcal-proj}, we can provide another sufficient condition for convexity of a VGF: that all of the maximal points of $\Mset$ with respect to the partial order defined by $\Sbb_+^m$ (the Loewner order) are PSD. 
These are the points $\Mmat\in\Mset$ for which $(\Mset - \Mmat) \cap \Sbb_+^m = \{0_m\}$. 
In all of these pursuits, we are looking for a subset $\Mset'$ of PSD cone such that $\Omega_\Mset \equiv  \Omega_{\Mset'}$. When such a set exists, $\Omega_\Mset$ is convex and many optimization quantities can be computed for it.

Hereafter, we assume there exists a set $\Mset'\subseteq \Mset\cap \mathbb{S}_+$ for which $\Omega_\Mset \equiv \Omega_{\Mset'}$, which in turn implies $\Omega_\Mset \equiv  \Omega_{\Mset\cap \mathbb{S}_+}$. 
For example, based on Theorem \ref{thm:polytope}, this property holds for all convex VGFs associated to a polytope $\Mset$.

\subsection{Conjugate function}
\label{sec:conjugate}
For any function $\Omega\,$, the conjugate function is defined as 
$\Omega^\st(Y) = \sup_X \;\iprod{X}{Y} - \Omega(X)$ 
and the transformation that maps $\Omega$ to $\Omega^\st$ is called the 
Legendre-Fenchel transform (e.g., \cite[Section~12]{Rockafellar70book}).

\begin{lemma}[conjugate VGF]	\label{thm:dual-conj}
Consider a convex VGF associated to a compact convex set $\Mset$ with $\Omega_\Mset \equiv  \Omega_{\Mset\cap \Sbb_+}$. 
The conjugate function is 
\begin{eqnarray}	\label{eq:conj-pseudo}
\Omega_\Mset^\st(Y) = \tfrac{1}{4} \inf_{ \Mmat}  \bigl\{  \tr(Y \Mmat^\dagger Y^T)  \; : \;  \op{range}(Y^T)\subseteq \op{range}(\Mmat)  \, ,\;  \Mmat\in\Mset\cap\Sbb_+^m \bigr\}, 
\end{eqnarray}
where $\Mmat^\dagger$ is the Moore-Penrose pseudoinverse of $\Mmat$. 
\end{lemma}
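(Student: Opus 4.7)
\medskip

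\noindent\textbf{Plan of proof.} The plan is to compute the conjugate directly from its definition, swap the order of the outer supremum and inner maximization by a minimax argument, and then solve the resulting inner problem in closed form.

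First, I would unfold the definition $\Omega_\Mset^\st(Y) = \sup_{X} \{\iprod{X}{Y} - \Omega_\Mset(X)\}$ and use the standing assumption $\Omega_\Mset \equiv \Omega_{\Mset\cap\Sbb_+^m}$ (so the maximization in the definition of $\Omega_\Mset$ may be restricted to PSD matrices) to write
\[
\Omega_\Mset^\st(Y) \;=\; \sup_{X\in\R^{n\times m}}\, \inf_{\Mmat\in\Mset\cap\Sbb_+^m}\, \bigl\{\iprod{X}{Y} - \tr(X\Mmat X^T)\bigr\}.
\]
Next, I would invoke Sion's minimax theorem to exchange the $\sup_X$ and $\inf_\Mmat$. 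The necessary hypotheses hold: the set $\Mset\cap\Sbb_+^m$ is convex and compact; for fixed $X$ the integrand is linear (hence continuous, quasi-convex) in $\Mmat$; and for fixed $\Mmat\succeq 0$ the integrand is concave (hence continuous, quasi-concave) in $X$ because $\tr(X\Mmat X^T)$ is a convex quadratic of $X$ when $\Mmat\succeq 0$. Compactness of $\Mset\cap\Sbb_+^m$ is what makes the swap legitimate despite $X$ ranging over the unbounded space $\R^{n\times m}$.

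After swapping, it remains to evaluate, for each fixed $\Mmat\in\Mset\cap\Sbb_+^m$, the inner supremum $\sup_X \{\iprod{X}{Y} - \tr(X\Mmat X^T)\}$. Setting the gradient in $X$ to zero gives the optimality condition $2X\Mmat = Y$, which is solvable in $X$ if and only if $\op{range}(Y^T)\subseteq\op{range}(\Mmat)$. In that case a minimizer is $X = \tfrac{1}{2}Y\Mmat^\dagger$, and substituting back, using $\Mmat^\dagger \Mmat \Mmat^\dagger = \Mmat^\dagger$ and symmetry of $\Mmat^\dagger$, yields the value $\tfrac{1}{4}\tr(Y\Mmat^\dagger Y^T)$. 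When the range condition fails, any $X = cZ$ with $Z$ supported on $\ker\Mmat$ and $\iprod{Z}{Y}\neq 0$ drives $\iprod{X}{Y}\to+\infty$ while $\tr(X\Mmat X^T) = 0$, so the supremum is $+\infty$; effectively the outer infimum is restricted to those $\Mmat$ for which $\op{range}(Y^T)\subseteq\op{range}(\Mmat)$. Combining these two cases gives precisely the stated formula for $\Omega_\Mset^\st(Y)$.

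The main obstacle is justifying the minimax swap, since the $X$-domain is not compact; this is resolved by the version of Sion's theorem that only requires one of the two sets to be compact, with the other being convex. A secondary subtlety, handled by the case analysis above, is correctly tracking the kernel of $\Mmat$ when $\Mmat$ is singular: one must both ensure the stationary equation $2X\Mmat = Y$ is consistent (the range condition) and verify that the supremum is truly $+\infty$ otherwise, so that the range constraint appears naturally as a feasibility condition in the infimum.
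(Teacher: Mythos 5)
Your proof is correct, but it takes a genuinely different route from the paper's. You compute $\Omega_\Mset^\st$ directly from its definition, exchange $\sup_X$ and $\inf_{\Mmat}$ via Sion's minimax theorem (legitimate here: $\Mset\cap\Sbb_+^m$ is compact convex, the objective is affine in $\Mmat$ and concave in $X$ once $\Mmat\succeq 0$, and only one of the two sets needs to be compact), and then solve the inner concave maximization in closed form; the range condition emerges as the consistency condition for the stationarity equation $2X\Mmat=Y$, and your unboundedness witness in the infeasible case is valid (e.g.\ $Z=Y(\eye-\Mmat\Mmat^\dagger)$ has $Z\Mmat=0$ and $\iprod{Z}{Y}=\|Z\|_F^2>0$). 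The paper instead introduces the semidefinite program $f_\Mset$ in \eqref{eq:SDP}, shows by Lagrangian duality of the inner problem over $(Y,C)$ that $f_\Mset^\st\equiv\Omega_{\Mset\cap\Sbb_+}$, invokes biconjugation $f_\Mset^{\st\st}=f_\Mset$ for the convex lower semicontinuous $f_\Mset$, and finally passes to \eqref{eq:conj-pseudo} by a generalized Schur complement. Your argument is more direct and handles the singular-$\Mmat$ bookkeeping explicitly, at the price of invoking a minimax theorem; the paper's detour is not wasted, since the SDP representation \eqref{eq:SDP} of the conjugate is exactly what gets reused later for the dual-norm formula \eqref{eq:dual-div} and the proximal-operator computations in Section~\ref{sec:prox}.
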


Note that $\Omega^\st(Y)$ is $+\infty$ if the optimization problem in \eqref{eq:conj-pseudo} is infeasible; i.e., if 
$\op{range}(Y^T)\not\subseteq \op{range}(\Mmat)$ for all $\Mmat\in\Mset\cap\Sbb_+^m$; equivalently, if 
$Y(\eye-\Mmat\Mmat^\dagger)$ is nonzero for all $\Mmat\in\Mset\cap\Sbb_+^m$, where $\Mmat\Mmat^\dagger$ 
is the orthogonal projection onto the range of $\Mmat$. This can be seen using 
generalized Schur complements; e.g., see Appendix A.5.5~in \cite{boyd2004convex} or \cite{burns1974generalized}.

\begin{proof}
By our assumption, that $\Omega_\Mset \equiv  \Omega_{\Mset\cap \Sbb_+}$, we get $\Omega_\Mset^\st\equiv \Omega_{\Mset\cap\Sbb_+}^\st$. 
Define
\begin{align}\label{eq:SDP}
f_\Mset(Y) ~=~ \tfrac{1}{4} \inf_{ \Mmat,\,C} \;  \biggl\{  \tr(C)  \; : \;  \begin{bmatrix}  \Mmat & Y^T \\ Y & C\end{bmatrix} \succeq \zero \, ,\;  \Mmat\in\Mset \biggr\} \,.
\end{align}
The positive semidefiniteness constraint 
implies $M\succeq \zero$, 
therefore 
$f_\Mset \equiv f_{\Mset\cap\Sbb_+}$.
Its conjugate function is
\begin{equation} \begin{aligned} \label{eq:f_M_conj}
f_\Mset^\st(X) 
&~=~ \adjustlimits\sup_Y \sup_{\Mmat,C}\; \biggl\{ \iprod{X}{Y} - \tfrac{1}{4}\tr(C) \; : \;  \begin{bmatrix}  \Mmat & Y^T \\ Y & C\end{bmatrix} \succeq \zero \, ,\;  \Mmat\in\Mset \biggr\} \\ 
&~=~ \adjustlimits\sup_{\Mmat\in\Mset\cap\Sbb_+} \sup_{Y,C}\; \biggl\{ \iprod{X}{Y} - \tfrac{1}{4}\tr(C) \; : \;  \begin{bmatrix}  \Mmat & Y^T \\ Y & C\end{bmatrix} \succeq \zero  \biggr\} \,. 
\end{aligned} \end{equation}
Consider the dual of the inner optimization problem over $Y $and $C$. 
{Let $W\succeq \zero$ be the dual variable with corresponding blocks, and write the Lagrangian as
\[
L(Y,C,W) = \iprod{X}{Y} - \tfrac{1}{4}\tr(C) +\iprod{W_{11}}{\Mmat}+ 2 \iprod{W_{21}}{Y} + \iprod{W_{22}}{C} \,,
\] 
whose 
maximum value is finite only if $W_{21} = -\tfrac{1}{2}X$ and $W_{22}= \tfrac{1}{4}I$. Therefore, the dual problem is 
\[
\min_{W_{11}}\; \biggl\{ \iprod{W_{11}}{\Mmat} :\;  \begin{bmatrix}W_{11}& -\tfrac{1}{2}X^T \\-\tfrac{1}{2}X& \tfrac{1}{4}I\end{bmatrix}\succeq \zero \biggr\} 
= \min_{W_{11}}\; \biggl\{ \iprod{W_{11}}{\Mmat} :\;  W_{11}\succeq X^TX \biggr\},
\]
which is equal to $\iprod{\Mmat}{X^TX}\,$. 
Plugging in \eqref{eq:f_M_conj}, we conclude $f_\Mset^\st\equiv\Omega_{\Mset\cap\Sbb_+}$. 
}

Next, convexity and lower semi-continuity of $f_\Mset$ imply
$f_\Mset^{\st\st}=f_\Mset\,$ 
(e.g., \cite[Theorem~11.1]{rockafellar2009variational}).
Therefore, $f_\Mset$ is equal to $\Omega_{\Mset\cap\Sbb_+}^\st$ which we showed
to be equal to $\Omega_\Mset^\st\,$. 
Taking the generalized Schur complement of the
semidefinite constraint in \eqref{eq:SDP} gives the desired representation in
\eqref{eq:conj-pseudo}.
\end{proof}
Note that \eqref{eq:SDP} is preferred to a representation with $\Mset\cap\Sbb_+$ substituted for $\Mset$. This is because $\Mset$ can have a much simpler representation than $\Mset\cap\Sbb_+$; e.g., 
as for \eqref{eq:Mbox}.

\subsection{Related norms}\label{sec:related_norms}

Given a convex VGF $\Omega_\Mset$, with $\Omega_\Mset \equiv \Omega_{\Mset\cap\Sbb_+}$, we have
\[
\Omega_\Mset(X) = \sup_{\Mmat\in\Mset\cap\Sbb_+}\; \tr(X\Mmat X^T) =  \sup_{\Mmat\in\Mset\cap\Sbb_+}\; \norm{X\Mmat^{1/2}}_F^2 \,.
\]
This representation shows that $\sqrt{\Omega_\Mset}$ is a semi-norm: absolute homogeneity holds, and it is easy to prove the
triangle inequality for the maximum of semi-norms. 
The next lemma, which can be seen from Corollary 15.3.2 of \cite{Rockafellar70book}, generalizes this assertion.
\begin{lemma}	\label{lem:homog-norm}
Suppose a function $\Omega:\R^{n\times m}\to \R$ is homogeneous of order 2, 
i.e., $\Omega(\theta X) = \theta^2\Omega(X)\,$. 
Then its square root $\norm{X} = \sqrt{\Omega(X)}\,$ is a semi-norm if and only
if $\Omega$ is convex. If $\Omega$ is strictly convex then $\sqrt{\Omega}$ is a norm. 
\end{lemma}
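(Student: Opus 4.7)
The plan is to verify each defining property of a semi-norm for $\sqrt{\Omega}$ separately, using only two-degree homogeneity plus convexity, and then check that the converse direction is a clean algebraic consequence of the triangle inequality and absolute homogeneity squared. First, I would extract the easy consequences of order-two homogeneity: setting $\theta=0$ gives $\Omega(0)=0$, and $\theta=-1$ gives $\Omega(-X)=\Omega(X)$. Combining these with midpoint convexity applied to $X$ and $-X$, namely $0=\Omega(0)\leq \tfrac12\Omega(X)+\tfrac12\Omega(-X)=\Omega(X)$, shows that a convex $\Omega$ is automatically nonnegative, so $\sqrt{\Omega(X)}$ is well defined, nonnegative, and vanishes at $X=0$. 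Absolute homogeneity is then immediate: $\sqrt{\Omega(\theta X)}=\sqrt{\theta^2\Omega(X)}=|\theta|\sqrt{\Omega(X)}$.

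The main obstacle is the triangle inequality in the ``convex $\Rightarrow$ semi-norm'' direction, and this is the one place where order-two homogeneity really pays off. For fixed $X,Y$ with $\Omega(X),\Omega(Y)>0$, and any $\alpha,\beta>0$ with $\alpha+\beta=1$, I would write $X+Y=\alpha(X/\alpha)+\beta(Y/\beta)$ and invoke convexity followed by homogeneity to obtain
\[
\Omega(X+Y)\;\leq\;\alpha\,\Omega(X/\alpha)+\beta\,\Omega(Y/\beta)\;=\;\frac{\Omega(X)}{\alpha}+\frac{\Omega(Y)}{\beta}.
\]
Choosing $\alpha=\sqrt{\Omega(X)}/\bigl(\sqrt{\Omega(X)}+\sqrt{\Omega(Y)}\bigr)$ and $\beta$ analogously (or simply optimizing the right-hand side) yields $\Omega(X+Y)\leq\bigl(\sqrt{\Omega(X)}+\sqrt{\Omega(Y)}\bigr)^2$, and taking square roots gives the triangle inequality. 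The boundary cases where $\Omega(X)$ or $\Omega(Y)$ is zero follow by a continuity/limit argument or by an easy direct check using the inequality just derived with the other term perturbed.

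For the converse, assume $\sqrt{\Omega}$ is a semi-norm. For any $\lambda\in(0,1)$, the triangle inequality applied to $\lambda X$ and $(1-\lambda)Y$ together with absolute homogeneity yields $\sqrt{\Omega(\lambda X+(1-\lambda)Y)}\leq \lambda\sqrt{\Omega(X)}+(1-\lambda)\sqrt{\Omega(Y)}$. Squaring both sides and using the elementary inequality $(\lambda a+(1-\lambda) b)^2\leq \lambda a^2+(1-\lambda)b^2$ (which is just convexity of $t\mapsto t^2$) immediately produces $\Omega(\lambda X+(1-\lambda)Y)\leq \lambda\Omega(X)+(1-\lambda)\Omega(Y)$, so $\Omega$ is convex.

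Finally, for the strict convexity addendum, the only missing ingredient is definiteness: if $X\neq 0$ then $X\neq -X$, so strict convexity at $\lambda=\tfrac12$ applied to the pair $(X,-X)$ gives $0=\Omega(0)<\tfrac12\Omega(X)+\tfrac12\Omega(-X)=\Omega(X)$, hence $\sqrt{\Omega(X)}>0$, upgrading the semi-norm to a genuine norm. I expect the optimization in the triangle inequality to be the only step requiring any cleverness; everything else is bookkeeping that flows directly from the degree-two homogeneity identity $\Omega(\theta X)=\theta^2\Omega(X)$.
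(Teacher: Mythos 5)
Your proposal is correct and follows essentially the same route as the paper: the heart of both arguments is the identity $X+Y=\alpha(X/\alpha)+\beta(Y/\beta)$ combined with convexity and degree-two homogeneity, followed by the optimal choice $\alpha=\sqrt{\Omega(X)}/(\sqrt{\Omega(X)}+\sqrt{\Omega(Y)})$, and the converse is the same observation that $\Omega$ is the composition of the convex function $\sqrt{\Omega}$ with the nondecreasing convex squaring map (which you carry out by squaring the triangle inequality directly). Your handling of the boundary case $\Omega(Y)=0$ by letting $\alpha\to 1^{-}$ is in fact a little cleaner than the paper's explicit choice of $\theta$, and your explicit verification of definiteness under strict convexity fills in a step the paper only remarks on.
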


\paragraph{Dual Norm} 
Considering $\nor_\Mset \equiv \sqrt{\Omega_\Mset}\,$, we have $\tfrac{1}{2} \Omega_\Mset \equiv \tfrac{1}{2} \nor_\Mset^2\,$. Taking the conjugate function of both sides yields $2\Omega_\Mset^\st \equiv \tfrac{1}{2} (\nor_\Mset^\st)^2$ where we used the order-2 homogeneity of $\Omega_\Mset\,$. 
Therefore, $\nor_\Mset^\st \equiv 2\sqrt{\Omega_\Mset^\st}\,$. 
Given the representation of $\Omega_\Mset^\st$ in Lemma \ref{thm:dual-conj},
one can derive a similar representation for $\sqrt{\Omega_\Mset^\st}$ as follows.
\begin{theorem}
For a convex VGF $\Omega_\Mset$ associated to a nonempty compact convex set $\Mset$, with $\Omega_\Mset \equiv \Omega_{\Mset\cap\Sbb_+}$,
\begin{align} \label{eq:dual-div}
\norm{Y}_\Mset^\st = 2\sqrt{\Omega_\Mset^\st(Y) }
= \tfrac{1}{2}\inf_{\Mmat, C}\; \biggl\{  \tr(C) + \gamma_\Mset(\Mmat)  :\;  \begin{bmatrix}\Mmat& Y^T\\ Y & C\end{bmatrix} \succeq \zero  \biggr\} ,
\end{align}
where $\gamma_\Mset(\Mmat) = \inf \{\lambda\geq 0:\; \Mmat\in\lambda \Mset\}$ is the gauge function associated to $\Mset\,$. 
\end{theorem}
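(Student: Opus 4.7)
The plan is to reduce the SDP on the right of \eqref{eq:dual-div} to the pseudoinverse formula for $\Omega_\Mset^\st$ from Lemma \ref{thm:dual-conj}, and then invoke the identity $\norm{Y}_\Mset^\st = 2\sqrt{\Omega_\Mset^\st(Y)}$ established immediately before the theorem as a consequence of order-2 homogeneity. Thus the first equality of \eqref{eq:dual-div} is already in hand, and the work is to show that the SDP (call it $\tfrac{1}{2}\Phi(Y)$) equals $2\sqrt{\Omega_\Mset^\st(Y)}$.

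First, exactly as in the proof of Lemma \ref{thm:dual-conj}, I apply the generalized Schur complement to the block PSD constraint. This forces $\Mmat\succeq 0$ and $\op{range}(Y^T)\subseteq\op{range}(\Mmat)$, and the trace-minimizing $C$ is $Y\Mmat^\dagger Y^T$; finiteness of $\gamma_\Mset(\Mmat)$ further confines $\Mmat$ to the conic hull of $\Mset$. The SDP becomes
\[
\Phi(Y) = \inf_{\Mmat}\Bigl\{\tr(Y\Mmat^\dagger Y^T) + \gamma_\Mset(\Mmat)\;:\;\Mmat\succeq 0,\;\op{range}(Y^T)\subseteq\op{range}(\Mmat),\;\gamma_\Mset(\Mmat)<\infty\Bigr\}.
\]
By compactness of $\Mset$ and $0\in\Mset$, every nonzero feasible $\Mmat$ can be written uniquely as $\Mmat = tN$ with $t=\gamma_\Mset(\Mmat)>0$ and $N=\Mmat/t$ on the boundary of $\Mset$, so $\gamma_\Mset(N)=1$; since $\Mmat\succeq 0$, also $N\in\Mset\cap\Sbb_+^m$. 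Using $\Mmat^\dagger = t^{-1}N^\dagger$, the objective becomes $t^{-1}\tr(YN^\dagger Y^T)+t$, and AM-GM minimizes this over $t>0$ to $2\sqrt{\tr(YN^\dagger Y^T)}$ at $t^\star=\sqrt{\tr(YN^\dagger Y^T)}$.

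Collecting these steps,
\[
\Phi(Y) = 2\inf\Bigl\{\sqrt{\tr(YN^\dagger Y^T)}\;:\;N\in\Mset\cap\Sbb_+^m,\;\gamma_\Mset(N)=1,\;\op{range}(Y^T)\subseteq\op{range}(N)\Bigr\}.
\]
The infimum defining $\Omega_\Mset^\st(Y)$ in Lemma \ref{thm:dual-conj}, taken over all of $\Mset\cap\Sbb_+^m$, is in fact attained on the boundary $\gamma_\Mset(N)=1$: for any interior point, the scaling $N\mapsto N/\gamma_\Mset(N)$ stays in $\Mset\cap\Sbb_+^m$, preserves the range condition, and multiplies $\tr(YN^\dagger Y^T)$ by $\gamma_\Mset(N)<1$. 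Pulling the square root outside the infimum then gives $\Phi(Y) = 2\sqrt{4\,\Omega_\Mset^\st(Y)} = 4\sqrt{\Omega_\Mset^\st(Y)}$, so $\tfrac{1}{2}\Phi(Y) = 2\sqrt{\Omega_\Mset^\st(Y)} = \norm{Y}_\Mset^\st$ as required; the degenerate case $\Omega_\Mset^\st(Y)=+\infty$ (no feasible $\Mmat$) renders both sides infinite. The main subtlety is the bookkeeping in the ray parametrization — in particular, that $t$ equals $\gamma_\Mset(\Mmat)$ on the nose while the scaled $N$ lands in $\Mset\cap\Sbb_+^m$ rather than only in the conic hull of $\Mset$ — and this is clean precisely because of $0\in\Mset$, compactness of $\Mset$, and the standing assumption $\Omega_\Mset\equiv\Omega_{\Mset\cap\Sbb_+}$.
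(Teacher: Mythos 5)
Your proposal is correct and is essentially the paper's own argument run in reverse: where you parametrize $\Mmat = tN$ along rays of the gauge and minimize $t + t^{-1}\tr(YN^\dagger Y^T)$ by AM--GM, the paper starts from the variational identity $\sqrt{y}=\min_{\alpha>0}\{\alpha+\tfrac{y}{4\alpha}\}$ applied to the pseudoinverse formula of Lemma~\ref{thm:dual-conj} and then absorbs $\alpha$ into $\Mmat$ via the change of variable $\Mmat\mapsto\alpha\Mmat$, which is the same scale decomposition. The bookkeeping (Schur complement, $\gamma_\Mset=\gamma_{\Mset\cap\Sbb_+}$ on the PSD cone, and the degenerate cases) matches, so no further changes are needed.
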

\begin{proof}
The square root function, over positive numbers, can be represented in a variational form as 
$
\sqrt{y} = \min\,\{ \alpha + \frac{y}{4\alpha} :\; \alpha>0\}\,
$.  
Without loss of generality, suppose $\Mset$ is a compact convex set containing the origin. 
Provided that $\Omega_\Mset^\st(Y) >0\,$, from the variational representation of a conjugate VGF function we have
\begin{align*}
\sqrt{\Omega_\Mset^\st(Y) }
&=\tfrac{1}{4} \inf_{ \Mmat, \alpha\geq 0}  \left\{  \alpha + \tfrac{1}{\alpha}\tr(Y \Mmat^\dagger Y^T)  \; : \;  \op{range}(Y^T)\subseteq \op{range}(\Mmat)  \, ,\;  \Mmat\in\Mset\cap\Sbb_+^m \right\} \\
&=\tfrac{1}{4} \inf_{ \Mmat, \alpha\geq 0}  \left\{  \alpha + \tr(Y \Mmat^\dagger Y^T)  \; : \;  \op{range}(Y^T)\subseteq \op{range}(\Mmat)  \, ,\;  \Mmat\in\alpha(\Mset\cap\Sbb_+^m) \right\} 
\end{align*}
where we used $(\alpha\Mmat)^\dagger = \Mmat^\dagger/\alpha$ and performed a change of variable. The last representation is the same as the one given in the statement of the lemma, as the constraint restricts $\Mmat$ to the PSD cone, for which $\gamma_\Mset(\Mmat) = \gamma_{\Mset\cap\Sbb_+}(\Mmat)$. On the other hand, when $\Omega_\Mset^\st(Y) = 0\,$, the claimed representation returns 0 as well because $0\in \Mset$.
\end{proof}

As an example, $\Mset = \{\Mmat \succeq\zero :\; \tr(\Mmat)\leq1\}$ gives $\gamma_\Mset(\Mmat) = \tr(M)$ which if plugged in \eqref{eq:dual-div} yields the well-known semidefinite representation for nuclear norm.

\subsection{Subdifferentials}	\label{sec:subdiff}

In this section, we characterize the subdifferential of VGFs and their
conjugate functions, as well as that of their corresponding norms. 
Due to the variational definition of a VGF where the objective function is linear in $\Mmat$, and the fact that $\Mset$ is assumed to be compact, it is straightforward to
obtain the subdifferential of $\Omega_\Mset$ (e.g., see \cite[Theorem~4.4.2]{hiriart2013convex}).

\begin{proposition}\label{prop:subdiff-div}
For a convex VGF $\Omega_\Mset\equiv \Omega_{\Mset\cap \Sbb_+}$, the subdifferential at $X$ is given by
\begin{align*}
\partial \, \Omega_\Mset(X) 
= \op{conv}\left\{2 X \Mmat  : \; \tr(X\Mmat X^T) = \Omega(X), \; \Mmat\in\Mset\cap\Sbb_+ \right\} .
\end{align*}
For 
the norm $\|X\|_\Mset \equiv \sqrt{\Omega_\Mset}$, we have
$\partial\norm{X}_\Mset=\frac{1}{2\norm{X}_\Mset}\partial \,\Omega_\Mset(X)\,$
if $\Omega_\Mset(X)\neq 0$.
\end{proposition}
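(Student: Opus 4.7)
The plan is to combine the classical subdifferential formula for pointwise maxima of smooth convex functions with the scalar chain rule of convex analysis.

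For the first identity, the hypothesis $\Omega_\Mset \equiv \Omega_{\Mset \cap \Sbb_+}$ lets me replace $\Mset$ by its PSD slice and write
\[
\Omega_\Mset(X) = \max_{\Mmat \in \Mset \cap \Sbb_+}\, f_\Mmat(X), \qquad f_\Mmat(X) := \tr(X \Mmat X^T).
\]
Each $f_\Mmat$ is convex (because $\Mmat \succeq \zero$) and continuously differentiable in $X$ with $\nabla f_\Mmat(X) = 2 X \Mmat$, the map $(X, \Mmat) \mapsto f_\Mmat(X)$ is jointly continuous, and $\Mset \cap \Sbb_+$ is compact as the intersection of the compact set $\Mset$ with the closed PSD cone. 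These are precisely the hypotheses needed to invoke the max-function subdifferential formula of \cite[Theorem~4.4.2]{hiriart2013convex}, which then yields
\[
\partial \Omega_\Mset(X) = \op{conv}\bigcup_{\Mmat \in \Mmat^*(X)} \{\nabla f_\Mmat(X)\} = \op{conv}\{2 X \Mmat : \Mmat \in \Mmat^*(X)\},
\]
where $\Mmat^*(X) := \{\Mmat \in \Mset \cap \Sbb_+ : f_\Mmat(X) = \Omega_\Mset(X)\}$ is the nonempty, compact set of active maximizers, matching the claim.

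For the second identity, I would use the composition $\Omega_\Mset(X) = h(\|X\|_\Mset)$ with $h(t) = t^2$. Since $h$ is convex, differentiable, and nondecreasing on $[0, \infty)$, and $\|\cdot\|_\Mset$ is a convex function by Lemma~\ref{lem:homog-norm}, the standard subdifferential chain rule for a smooth nondecreasing outer composition applies and gives
\[
\partial \Omega_\Mset(X) = h'(\|X\|_\Mset)\, \partial \|X\|_\Mset = 2\|X\|_\Mset\, \partial \|X\|_\Mset.
\]
Whenever $\Omega_\Mset(X) \neq 0$ (equivalently $\|X\|_\Mset \neq 0$), I divide both sides by $2\|X\|_\Mset$ to recover the stated identity $\partial \|X\|_\Mset = \tfrac{1}{2\|X\|_\Mset}\, \partial \Omega_\Mset(X)$.

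The only non-routine step is checking the regularity hypotheses of the max-function subdifferential formula; but continuity, compactness, and per-$\Mmat$ differentiability all follow directly from the standing assumptions on $\Mset$ together with the restriction to $\Sbb_+$. Once the first identity is established, the norm identity is a mechanical application of the chain rule at a point where $h$ is differentiable with positive derivative.
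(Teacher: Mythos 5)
Your proposal is correct and follows essentially the same route as the paper, which gives no separate proof of this proposition beyond citing the same max-function subdifferential formula \cite[Theorem~4.4.2]{hiriart2013convex} and calling the result straightforward; you simply fill in the routine verification of its hypotheses and the chain-rule step for the norm, both of which are sound (the restriction to $\Omega_\Mset(X)\neq 0$ correctly avoids the degenerate case $h'(0)=0$).
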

As an example, the subdifferential of $\Omega(X) = \sum_{i,j=1}^m \Mbar_{ij} \abs{\x_i^T\x_j}\,$ from \eqref{eq:def-L1Gram}, is given by
\begin{align} 	\label{eq:subdiff-L1gram}
\partial \, \Omega(X) =  \{ 2 X \Mmat : \;
\Mmat_{ij} = \Mbar_{ij}\,\op{sign}(\x_i^T\x_j) \;\; \text{if} \;\; \iprod{\x_i}{\x_j} \neq 0   \;,\;\qquad  \\
 \Mmat_{ii} = \Mbar_{ii \;,\;} \abs{\Mmat_{ij}}\leq\Mbar_{ij} \;\text{ otherwise} \} \nonumber \,.
\end{align}

\begin{proposition}	\label{prop:subdiff-conj}
For a convex VGF $\Omega_\Mset\equiv \Omega_{\Mset\cap \Sbb_+}$, the subdifferential of its conjugate function is given by 
\begin{equation} \begin{aligned}	\label{eq:subdiff-st}
\partial \, \Omega_\Mset^\st(Y) 
&= \big\{ \tfrac{1}{2}(Y \Mmat^\dagger + W)  :\; 
\Omega(Y \Mmat^\dagger + W) =4\Omega^\st(Y)=\tr(Y\Mmat^\dagger Y^T) \,,\, \\ 
&\qquad\qquad\qquad \op{range}(W^T)\subseteq \ker(\Mmat)\subseteq \ker(Y)  \,,\; \Mmat\in\Mset\cap\Sbb_+ \big\} \,.
\end{aligned} \end{equation}
When $\Omega_\Mset^\st(Y)\neq 0$ we have 
$\partial\norm{Y}_\Mset^\st = \frac{2}{\norm{Y}^\st_\Mset}\partial \, \Omega_\Mset^\st(Y)\,$.
\end{proposition}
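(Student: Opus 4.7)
The plan is to derive \eqref{eq:subdiff-st} via Fenchel--Moreau inversion: $Z \in \partial \Omega_\Mset^\st(Y)$ if and only if $Y \in \partial \Omega_\Mset(Z)$. From Proposition \ref{prop:subdiff-div}, the latter means $Y$ lies in the convex hull of $\{2Z\Mmat : \Mmat \in \Mset \cap \Sbb_+^m,\ \tr(Z\Mmat Z^T) = \Omega_\Mset(Z)\}$. Because $\tr(Z\Mmat Z^T)$ is linear in $\Mmat$ and $\Mset \cap \Sbb_+^m$ is convex, any convex combination of such maximizers is itself a single element $\Mmat \in \Mset \cap \Sbb_+^m$ still attaining $\Omega_\Mset(Z)$, so the convex hull collapses: $Y = 2Z\Mmat$ for some admissible $\Mmat$, and conversely.

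Next I would solve $Y = 2Z\Mmat$ for $Z$. Let $P = \Mmat^\dagger \Mmat$ be the orthogonal projector onto $\op{range}(\Mmat)$. Right-multiplying by $\Mmat^\dagger$ gives $Y\Mmat^\dagger = 2ZP$, hence
\[
  2Z \;=\; Y\Mmat^\dagger + W, \qquad W := 2Z(I-P).
\]
Since $\Mmat(I-P) = 0$ we have $W\Mmat = 0$, i.e.\ $\op{range}(W^T) \subseteq \ker(\Mmat)$; and $Y(I-P) = 2Z\Mmat(I-P) = 0$ yields $\op{range}(Y^T) \subseteq \op{range}(\Mmat)$, equivalently $\ker(\Mmat) \subseteq \ker(Y)$. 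Conversely, for any $Z$ of the claimed form, $2Z\Mmat = Y\Mmat^\dagger\Mmat + W\Mmat = YP + 0 = Y$ using precisely these two inclusions; so the parametrization is bijective.

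It remains to reconcile the optimality conditions. The Fenchel equality $\Omega_\Mset(Z) + \Omega_\Mset^\st(Y) = \iprod{Y}{Z}$ combined with $\iprod{Y}{Z} = \iprod{2Z\Mmat}{Z} = 2\Omega_\Mset(Z)$ gives $\Omega_\Mset(Z) = \Omega_\Mset^\st(Y)$, so $\Omega_\Mset(Y\Mmat^\dagger + W) = \Omega_\Mset(2Z) = 4\Omega_\Mset(Z) = 4\Omega_\Mset^\st(Y)$. Using $\Mmat P = \Mmat$, one computes $\tr(Y\Mmat^\dagger Y^T) = 4\tr(ZP\Mmat Z^T) = 4\tr(Z\Mmat Z^T) = 4\Omega_\Mset(Z)$, giving the last equality in \eqref{eq:subdiff-st} and also confirming that the same $\Mmat$ attains the infimum in Lemma \ref{thm:dual-conj}. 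The norm identity then follows by the chain rule for convex composition with the square root applied to $\norm{Y}_\Mset^\st = 2\sqrt{\Omega_\Mset^\st(Y)}$, valid when $\Omega_\Mset^\st(Y) \neq 0$.

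The main obstacle is bookkeeping rather than depth: the reduction from convex hull to a single $\Mmat$ relies on convexity of $\Mset \cap \Sbb_+^m$ (which we are granted by the standing hypothesis $\Omega_\Mset \equiv \Omega_{\Mset \cap \Sbb_+}$), and one must carefully parametrize the free component $W$ by the Moore--Penrose kernel of $\Mmat$ so that each of the identities $Y = 2Z\Mmat$, $Y\Mmat^\dagger = 2ZP$, and $\tr(Y\Mmat^\dagger Y^T) = 4\Omega_\Mset(Z)$ remain consistent and the range/kernel inclusions line up on both sides of the duality.
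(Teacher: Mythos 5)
Your proof is correct, and it takes a genuinely different route from the paper's. You invert the primal subdifferential: $Z\in\partial\Omega_\Mset^\st(Y)$ iff $Y\in\partial\Omega_\Mset(Z)$, note that the convex hull in Proposition \ref{prop:subdiff-div} collapses (the argmax over the convex set $\Mset\cap\Sbb_+$ is itself convex and $\Mmat\mapsto 2Z\Mmat$ is linear, so any convex combination of maximizers is again of the form $2Z\Mmat$ for a single maximizer), and then solve $Y=2Z\Mmat$ explicitly by splitting $2Z$ along $\op{range}(\Mmat)$ and its orthogonal complement via $P=\Mmat^\dagger\Mmat$. The paper instead works on the dual side: it writes $\Omega^\st(Y)=\inf_\Mmat f(Y,\Mmat)+\iota_\Mset(\Mmat)$ with $f(Y,\Mmat)=\tfrac12\tr(Y\Mmat^\dagger Y^T)$, imports a formula for $\partial f$ from Burke's work on matrix support functionals, and applies the parametric-minimization subdifferential theorem (Rockafellar--Wets, Theorem 10.13) together with a normal-cone argument and Fenchel's inequality. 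Your approach is more elementary and self-contained --- it leans only on the already-established Proposition \ref{prop:subdiff-div} and basic Fenchel--Moreau duality, and it makes transparent the correspondence between the $\Mmat$ attaining the max in $\Omega(Z)$ and the $\Mmat$ attaining the inf in Lemma \ref{thm:dual-conj}; the paper's route generalizes more readily (e.g., to the inhomogeneous quadratic support functionals it cites). Two small points to tighten: state explicitly that the converse inclusion needs $\tr(Z\Mmat Z^T)=\Omega(Z)$, which follows by combining your two identities $\tr(Y\Mmat^\dagger Y^T)=4\tr(Z\Mmat Z^T)$ and $\Omega(Y\Mmat^\dagger+W)=4\Omega(Z)$ with the equality constraint in the set; and justify the subdifferential of $2\sqrt{\Omega^\st}$ by positive homogeneity rather than a bare ``chain rule,'' since the square root is concave.
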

\begin{proof}
We use the results on subdifferentiation in parametric minimization 
\cite[Section~10.C]{rockafellar2009variational}. 
First, let's fix some notation. 
Throughout the proof, we denote $\tfrac{1}{2}\Omega$ by $\Omega$, and $2\Omega^\st$ by $\Omega^\st$. 
Denote by $\iota_\Mset(\Mmat)$ the indicator function of the set $\Mset$ which is $1$ when $\Mmat\in\Mset$ and $+\infty$ otherwise.  
We use $\Mset$ instead of $\Mset\cap\Sbb_+$ to simplify the notation. Considering 
\[
f(Y,\Mmat) := \begin{cases} \frac{1}{2}\tr(Y \Mmat^\dagger Y^T) & \text{if } \op{range}(Y^T)\subseteq \op{range}(\Mmat)  \\ +\infty & \text{otherwise}  \end{cases}
\]
we have $\Omega^\st(Y) = \inf_{\Mmat}\; f(Y,\Mmat) + \iota_\Mset(\Mmat)\,$. 
For such a function, we can use results in \cite[Theorem~4.8]{burke2015matrix} to show that
\[
\partial f(Y,\Mmat) = \op{conv}\left\{ (Z,-\tfrac{1}{2}Z^TZ):\; Z = Y\Mmat^\dagger + W\,,\; \op{range}(W^T)\subseteq \ker(\Mmat) \right\} \,.
\]
Since $g(Y,\Mmat) := f(Y,\Mmat)+\iota_\Mset(\Mmat)$ is convex, we can use the second part of Theorem 10.13 in \cite{rockafellar2009variational}: for any choice of $\Mmat_0$ which is optimal in the definition of $\Omega^\st(Y)\,$, 
\[
\partial\Omega^\st(Y) = \{ Z:\; (Z, \zero)\in \partial g(Y,\Mmat_0) \} \,.
\]
Therefore, for any $Z\in\partial\Omega^\st(Y)$ we have 
$
  \tfrac{1}{2}Z^TZ \in \partial \iota_\Mset(\Mmat_0)  
  = \{G:~\iprod{G}{\Mmat'-\Mmat_0} \leq 0 \,,\; \forall\,\Mmat'\in\Mset \}
$ 
(Here $\partial \iota_\Mset(\Mmat_0)$ is the normal cone of $\Mset$ at $\Mmat_0$.)
This implies 
$
\tfrac{1}{2}\tr(Z\Mmat' Z^T) \leq \tfrac{1}{2}\tr(Z\Mmat_0 Z^T) 
$ 
for all $\Mmat'\in\Mset\,$. Taking the supremum of the left hand side over all $\Mmat'\in\Mset\,$, we get
\[
\Omega(Z) = \tfrac{1}{2}\tr(Z\Mmat_0 Z^T) 
= \tfrac{1}{2}\tr( Y\Mmat_0^\dagger Y^T) = \Omega^\st(Y) \,,
\]
where the second equality follows from $\op{range}(W^T)\subseteq \ker(\Mmat_0)$ (which is equivalent to  $\Mmat_0 W^T = \zero$). 
Alternatively, for any matrix $Z$ from the right hand side of \eqref{eq:subdiff-st} (after adjustment to our rescaling of definition of $\Omega$ by $\tfrac{1}{2}$), and any $Y'\in\R^{n\times m}$ we have
\[
\Omega^\st(Y')
\geq \iprod{Y'}{Z} - \Omega(Z)  
= \iprod{Y'}{Z} - \Omega^\st(Y) 
= \iprod{Y'-Y}{Z} + \Omega^\st(Y) 
\]
where we used Fenchel's inequality, as well as the characterization of $Z$.
Therefore, $Z\in\partial\Omega^\st(Y)$. This finishes the proof. 
Note that for an achieving $\Mmat$, $\ker(M)\subseteq\ker(Y)$ (i.e., $\op{range}(Y^T)\subseteq \op{range}(\Mmat)$) has to hold for the conjugate function to be defined.
\end{proof}

Since $\partial\Omega^\st(Y)$ is non-empty, for any choice of $\Mmat_0\,$, 
there exists a $W$ such that
$\tfrac{1}{2}(Y\Mmat_0^\dagger+W)\in\partial\Omega^\st(Y)\,$. However, finding
such $W$ is not trivial. 
The following lemma characterizes the subdifferential as the solution set of a convex optimization problem involving $\Omega$ and affine constraints. 
\begin{lemma}	\label{lem:conj-subdiff-as-minz}
Given $Y$ and an optimal $\Mmat_0$, 
\begin{align*}
\partial\Omega^\st(Y) 
&=\op{Arg\,min}_Z\; \left\{ \Omega(Z) :\; Z=\tfrac{1}{2}(Y \Mmat_0^\dagger + W) \,,\; W \Mmat_0 \Mmat_0^\dagger = \zero  \right\},
\end{align*}
where $W \Mmat_0 \Mmat_0^\dagger = \zero$ is equivalent to $\op{range}(W^T)\subseteq \ker(\Mmat_0)$.
\end{lemma}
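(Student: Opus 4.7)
The plan is to reduce the constrained minimization on the right-hand side to a Fenchel--Young equality for $\Omega_\Mset$ and $\Omega_\Mset^\st$. The crucial observation will be that on the feasible set, the linear functional $Z \mapsto \iprod{Y}{Z}$ is \emph{constant}, so minimizing $\Omega_\Mset(Z)$ there is the same as minimizing $\Omega_\Mset(Z)-\iprod{Y}{Z}$, which by Fenchel--Young is attained exactly on $\partial \Omega_\Mset^\st(Y)$.

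For the constancy, I would first invoke Lemma \ref{thm:dual-conj}: optimality of $\Mmat_0$ requires $\op{range}(Y^T)\subseteq \op{range}(\Mmat_0)$, equivalently $Y = Y\Mmat_0\Mmat_0^\dagger$ and $\ker(\Mmat_0)\subseteq\ker(Y)$. Combined with the feasibility constraint $W\Mmat_0\Mmat_0^\dagger = \zero$ (equivalently $\op{range}(W^T)\subseteq \ker(\Mmat_0)$), this forces $YW^T=\zero$ and hence $\iprod{Y}{W}=0$. Therefore, for every feasible $Z = \tfrac{1}{2}(Y\Mmat_0^\dagger + W)$,
\[
\iprod{Y}{Z} \;=\; \tfrac{1}{2}\tr(Y\Mmat_0^\dagger Y^T) \;=\; 2\,\Omega_\Mset^\st(Y),
\]
by optimality of $\Mmat_0$ in Lemma \ref{thm:dual-conj}. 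The Fenchel--Young inequality $\Omega_\Mset(Z)+\Omega_\Mset^\st(Y)\geq \iprod{Y}{Z}$, with equality iff $Z\in\partial\Omega_\Mset^\st(Y)$, then yields $\Omega_\Mset(Z) \geq \Omega_\Mset^\st(Y)$ on the whole feasible set, attained precisely on $\partial\Omega_\Mset^\st(Y)\,\cap\,\{\text{feasible }Z\}$.

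To close the loop I still need $\partial\Omega_\Mset^\st(Y)\subseteq \{\text{feasible }Z\}$, which guarantees that the lower bound $\Omega_\Mset^\st(Y)$ is actually attained and that the argmin coincides with the full subdifferential. This is exactly what Proposition \ref{prop:subdiff-conj} supplies: its proof applies \cite[Theorem~10.13]{rockafellar2009variational} with \emph{any} chosen optimal $\Mmat_0$, parametrizing every element of $\partial\Omega_\Mset^\st(Y)$ in the form $\tfrac{1}{2}(Y\Mmat_0^\dagger + W)$ with $W\Mmat_0\Mmat_0^\dagger=\zero$. The one subtle point I would be careful about is confirming that a \emph{single} fixed $\Mmat_0$ truly parametrizes the whole subdifferential rather than only a slice of it; this is, however, precisely the content of the Theorem~10.13 step in the proof of Proposition \ref{prop:subdiff-conj}, so once that is cited, the two inclusions combine to give the claimed identity.
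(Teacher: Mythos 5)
Your proof is correct, and it reaches the key inequality $\Omega(Z)\geq\Omega^\st(Y)$ on the feasible set by a slightly different mechanism than the paper. The paper's (very terse) justification bounds $\Omega(Z)=\sup_{\Mmat\in\Mset}\tr(Z\Mmat Z^T)\geq\tr(Z\Mmat_0 Z^T)$ directly from the variational definition, and then computes $\tr(Z\Mmat_0 Z^T)=\tfrac14\tr(Y\Mmat_0^\dagger Y^T)=\Omega^\st(Y)$ using $W\Mmat_0=\zero$; the identification of the argmin with $\partial\Omega^\st(Y)$ is then read off from the explicit characterization in Proposition \ref{prop:subdiff-conj}. You instead observe that $\iprod{Y}{\cdot}$ is constant (equal to $2\Omega^\st(Y)$) on the feasible affine set --- using $\ker(\Mmat_0)\subseteq\ker(Y)$ to kill the $\iprod{Y}{W}$ term --- and then invoke Fenchel--Young, which simultaneously delivers the lower bound and the statement that equality holds exactly on $\partial\Omega^\st(Y)$. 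This is a clean repackaging: it makes explicit the Fenchel-inequality argument that is buried inside the ``alternatively'' half of the paper's proof of Proposition \ref{prop:subdiff-conj}, at the cost of needing $\Omega=\Omega^{\st\st}$ (which holds since a convex VGF is finite and continuous). Both routes still need the inclusion $\partial\Omega^\st(Y)\subseteq\{\text{feasible }Z\}$ for the \emph{single} fixed $\Mmat_0$, and you correctly identify that this is supplied by the application of \cite[Theorem~10.13]{rockafellar2009variational} in the proof of Proposition \ref{prop:subdiff-conj}, which parametrizes the entire subdifferential through any one optimal $\Mmat_0$. No gaps.
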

This is because for all feasible $Z$ we have $\Omega(Z) \geq \op{tr}(Z \Mmat_0 Z^T) = \Omega^\st(Y)$. Moreover, notice that the optimality of $\Mmat_0$ implies $\ker(\Mmat_0) \subseteq\ker(Y)$.

The characterization of the whole subdifferential is helpful for understanding optimality conditions, but algorithms only need to compute a single subgradient, which is easier than computing the whole subdifferential.

\subsection{Composition of VGF and absolute values}
\label{sec:VGF+abs}

The characterization of the subdifferential allows us to establish conditions
for convexity of $\Psi(X) = \Omega(\abs{X})$ defined in
\eqref{eq:def-Omega-abs-X}. 
Our result is based on the following Lemma.
\begin{lemma}	\label{lem:inf-absolute}
Given a function $f:\R^n\to\R\,$, consider $g(\x) = \min_{\y\geq\abs{\x}}\, f(\y)\,$, and $h(\x) = f(\abs{\x})\,$, where the absolute values and inequalities are all entry-wise. Then,  
\begin{enumerate}[(a)]
\item\label{lem:inf-absolute-a} $h^{\st\st}\leq g \leq h\,$.
\item\label{lem:inf-absolute-b} If $f$ is convex then $g$ is convex and $g=h^{\st\st}$. 
\end{enumerate}
\end{lemma}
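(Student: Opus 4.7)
The plan is to handle the two parts separately, relying in both on a single direct computation of the Fenchel conjugate.

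For part (\ref{lem:inf-absolute-a}), the inequality $g \leq h$ is immediate: the vector $\y=\abs{\x}$ is feasible in the minimization defining $g(\x)$, giving $g(\x)\leq f(\abs{\x}) = h(\x)$. The nontrivial inequality $h^{\st\st}\leq g$ will come from showing the stronger fact $g^\st\equiv h^\st$ by a direct calculation. On the $h$ side, writing $\x=\op{diag}(s)\y$ with $\y=\abs{\x}\geq 0$ and $s\in\{\pm 1\}^n$, the maximization over $s$ for fixed $\y$ gives
\[
h^\st(\z)=\sup_{\y\geq \zero}\; \iprod{\abs{\z}}{\y}-f(\y).
\]
On the $g$ side, swapping the order of the two suprema,
\[
g^\st(\z)=\sup_{\y\geq \zero}\; \Bigl(\sup_{\abs{\x}\leq \y}\iprod{\x}{\z}\Bigr)-f(\y)=\sup_{\y\geq \zero}\;\iprod{\abs{\z}}{\y}-f(\y),
\]
since for $\y\geq \zero$ the inner supremum equals $\iprod{\abs{\z}}{\y}$ (achieved by $x_i=y_i\op{sign}(z_i)$). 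Hence $g^\st\equiv h^\st$, so $g^{\st\st}\equiv h^{\st\st}$, and combined with the universal inequality $g^{\st\st}\leq g$ this gives $h^{\st\st}\leq g\leq h$.

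For part (\ref{lem:inf-absolute-b}), I first show $g$ is convex by a direct chord argument. Given $\x_1,\x_2$, $\theta\in[0,1]$, and any feasible $\y_i\geq \abs{\x_i}$, the triangle inequality applied entrywise gives $\theta\y_1+(1-\theta)\y_2 \geq \theta\abs{\x_1}+(1-\theta)\abs{\x_2}\geq \abs{\theta\x_1+(1-\theta)\x_2}$, so this convex combination is feasible in the minimization defining $g(\theta\x_1+(1-\theta)\x_2)$. Convexity of $f$ then yields
\[
g(\theta\x_1+(1-\theta)\x_2)\leq f(\theta\y_1+(1-\theta)\y_2)\leq \theta f(\y_1)+(1-\theta)f(\y_2),
\]
and taking the infimum over $\y_1,\y_2$ proves convexity of $g$. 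Equivalently, $g$ is the partial minimization of the convex function $(\x,\y)\mapsto f(\y)+\iota_{\{\y\geq\abs{\x}\}}(\x,\y)$ over $\y$. Once $g$ is convex (and closed, which we inherit from $f$ being closed convex, since the feasible set in the partial minimization is polyhedral), $g = g^{\st\st}$, and part (\ref{lem:inf-absolute-a}) gives $g^{\st\st}\equiv h^{\st\st}$, so $g\equiv h^{\st\st}$.

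The only delicate point I anticipate is verifying lower semicontinuity of $g$ needed for $g=g^{\st\st}$; this is where a standing assumption that $f$ is closed (proper) convex is used, together with the fact that the constraint $\y\geq\abs{\x}$ is polyhedral so partial minimization preserves closedness. Everything else reduces to bookkeeping around the identity $\sup_{\abs{\x}\leq\y}\iprod{\x}{\z}=\iprod{\abs{\z}}{\y}$ for $\y\geq \zero$, which is the linchpin of the argument.
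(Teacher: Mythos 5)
Your proof is correct, but it reaches part~(\ref{lem:inf-absolute-a}) by a genuinely different and in fact sharper route than the paper. The paper computes $h^\st(\y)=\sup_{\x\geq\zero}\{\iprod{\x}{\abs{\y}}-f(\x)\}$ and then bounds $h^{\st\st}$ by a sup--inf $\leq$ inf--sup (weak duality) step, landing directly on $g$; it never identifies $g^\st$. You instead prove the exact conjugate identity $g^\st\equiv h^\st$ by evaluating both sides against the same one-sided supremum $\sup_{\y\geq\zero}\iprod{\abs{\z}}{\y}-f(\y)$, and then get $h^{\st\st}=g^{\st\st}\leq g$ from the universal Fenchel--Moreau inequality. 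This buys you more than the paper's argument: the identity $g^\st\equiv h^\st$ is strictly stronger information, and it makes part~(\ref{lem:inf-absolute-b}) a one-liner ($g$ convex and closed $\Rightarrow g=g^{\st\st}=h^{\st\st}$), whereas the paper closes part~(\ref{lem:inf-absolute-b}) by invoking the characterization of $h^{\st\st}$ as the largest lsc convex minorant of $h$ and squeezing $g$ between $h^{\st\st}$ and $h$. The chord argument for convexity of $g$ is identical in both.

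One caution on the ``delicate point'' you flag: the blanket claim that partial minimization over a polyhedral constraint preserves closedness is not a valid general principle, so as stated that justification is shaky. It is, however, unnecessary here: since the lemma takes $f:\R^n\to\R$ finite-valued, $g\leq h<+\infty$ everywhere, so either $g$ is a finite convex function on all of $\R^n$ (hence continuous, hence closed, and $g=g^{\st\st}$), or $g$ takes the value $-\infty$ somewhere and therefore identically, in which case $g^\st\equiv h^\st\equiv+\infty$ and $g\equiv h^{\st\st}\equiv-\infty$ trivially. Note that the paper's own closing step in part~(\ref{lem:inf-absolute-b}) silently needs the same lower semicontinuity of $g$ (to conclude $g\leq h^{\st\st}$ from the minorant characterization), so you are at least explicit about a gap the paper glosses over; just replace the polyhedrality appeal with the finiteness/continuity argument above.
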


\begin{proof}
\eqref{lem:inf-absolute-a} In $h^\st(\y) = \sup_\x\; \{\iprod{\x}{\y} - f(\abs{\x})\}\,$, the optimal $x$ has the same sign pattern as $y\,$; hence $h^\st(\y)= \sup_{\x\geq \zero}\; \{\iprod{\x}{\abs{\y}} - f(\x)\} \,$. 
Next, we have 
\begin{eqnarray*}
h^{\st\st}(\z) 
&=& \sup_\y\; \bigl\{\iprod{\y}{\z} - \sup_{\x\geq \zero}\;\{ \iprod{\x}{\abs{\y}} - f(\x)\} \bigr \}  
~=~ \adjustlimits\sup_{\y\geq \zero}\inf_{\x\geq \zero}\; \bigl\{\iprod{\y}{\abs{\z}} - \iprod{\x}{\y} + f(\x) \bigr\} \\
&\leq& \adjustlimits\inf_{\x\geq \zero}\sup_{\y\geq \zero}\; \bigl\{ \iprod{\y}{\abs{\z}} - \iprod{\x}{\y} + f(\x)  \bigr\}
~=~ \adjustlimits\inf_{\x\geq \zero}\sup_{\y\geq \zero}\; \bigl\{ \iprod{\y}{\abs{\z}-\x} + f(\x)  \bigr\} \\
&=& \inf_{\x\geq \abs{\z}}\; f(\x) ~=~ g(\z).
\end{eqnarray*}
This shows the first inequality in part~\eqref{lem:inf-absolute-a}. 
The second inequality follows directly from the definition of~$g$ and~$h$.

\eqref{lem:inf-absolute-b} Consider $\x_1,\x_2\in\R^n$ and $\theta \in[0,1]\,$. 
Suppose $g(\x_i)=f(\y_i)$ for some $\y_i\geq\abs{\x_i}\,$, for $i=1,2\,$. 
In other words, $\y_i$ is the minimizer in the definition of $g(\x_i)$. 
Then, 
\[
\theta\y_1 + (1-\theta)\y_2 \geq \theta\abs{\x_1} + (1-\theta)\abs{\x_2} \geq \abs{\theta\x_1+(1-\theta)\x_2} \,.
\]
By definition of~$g$ and convexity of~$f$
\[
g(\theta\x_1+(1-\theta)\x_2) \leq f(\theta\y_1 + (1-\theta)\y_2) \leq \theta f(\y_1) +(1-\theta) f(\y_2) = \theta g(\x_1) +(1-\theta) g(\x_2),
\]
which implies that $g$ is convex.
It is a classical result that the epigraph of the biconjugate $h^{**}$ is the 
closed convex hull of the epigraph of~$h$; in other words, $h^{**}$ is the 
largest lower semi-continuous convex function that is no larger than~$h$
(e.g., \cite[Theorem~12.2]{Rockafellar70book}).
Since $g$ is convex and $h^{\st\st}\leq g \leq h\,$, 
we must have $h^{\st\st}=g\,$. 
\end{proof}

\begin{corollary}	\label{cor:Psi-convex}
Let $\Omega_\Mset$ be a convex VGF. 
Then, $\Omega_\Mset(\abs{X})$ is a convex function of $X$ if and only if
$\Omega_\Mset(\abs{X}) = \min_{Y\geq \abs{X}}\; \Omega_\Mset(Y)\,$.  
\end{corollary}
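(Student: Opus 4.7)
The plan is to deduce the corollary as an essentially immediate consequence of Lemma \ref{lem:inf-absolute} by taking $f = \Omega_\Mset$, $h(X) = \Omega_\Mset(\abs{X})$, and $g(X) = \min_{Y \geq \abs{X}} \Omega_\Mset(Y)$, where the inequalities and absolute values are entrywise (and the lemma is applied with $\mathbb{R}^n$ replaced by $\mathbb{R}^{n \times m}$, which does not affect any of its arguments).

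For the easy direction, suppose $\Omega_\Mset(\abs{X}) = \min_{Y \geq \abs{X}} \Omega_\Mset(Y)$, i.e., $h \equiv g$. Since $\Omega_\Mset$ is convex by assumption, Lemma \ref{lem:inf-absolute}(b) gives that $g$ is convex, and therefore so is $h$.

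For the other direction, suppose $h$ is convex. Lemma \ref{lem:inf-absolute}(b) gives the identity $g = h^{\st\st}$, so it suffices to show that $h^{\st\st} = h$. Since $\Omega_\Mset$ is a finite-valued convex function on $\Sbb^m$ (it is the support function of the compact set $\Mset$), it is continuous, hence the composition $h(X) = \Omega_\Mset(\abs{X})$ is a finite-valued continuous function on $\R^{n \times m}$. A finite convex function on a Euclidean space coincides with its biconjugate (see, e.g., \cite[Theorem~12.2]{Rockafellar70book}), so $h^{\st\st} = h$ and therefore $g = h$, which is the claimed identity.

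The main subtlety, and the one place where a careful argument is needed, is invoking $h^{\st\st} = h$ in the converse direction: this relies on $h$ being lower semicontinuous, which in turn hinges on the continuity of $\Omega_\Mset$ as a finite-valued convex support function. Everything else is a direct appeal to Lemma \ref{lem:inf-absolute}.
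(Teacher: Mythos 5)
Your proposal is correct and follows essentially the same route as the paper: both directions reduce to Lemma \ref{lem:inf-absolute} together with the identity $h = h^{\st\st}$ for the closed convex function $h(X)=\Omega_\Mset(\abs{X})$. The only cosmetic difference is that in the converse direction you obtain $g=h$ from part (b)'s identity $g=h^{\st\st}$, whereas the paper sandwiches $g$ via part (a); your extra remark that $\Omega_\Mset$ is finite-valued (hence continuous, so $h$ is closed) just makes explicit what the paper asserts when it calls $h$ ``closed convex.''
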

\begin{proof}
Let $\Omega_\Mset$ be the function~$f$ in Lemma~\ref{lem:inf-absolute}. 
Then we have $h(X)=\Omega_\Mset(|X|)$ and $g(X)=\min_{Y\geq |X|} \Omega_\Mset(Y)$. 
Since here $h$ is a closed convex function, we have $h=h^{\st\st}$ 
\cite[Theorem~12.2]{Rockafellar70book}, thus
part~(a) of Lemma \ref{lem:inf-absolute} implies $h=g\,$. 
On the other hand, given a convex function~$f$,  
part~(b) of Lemma \ref{lem:inf-absolute} states that $g=h^{**}$ is also convex.
Hence, $h=g$ implies convexity of $h\,$. 
\end{proof}

Another proof of Corollary \ref{cor:Psi-convex}, in the case where $\sqrt{\Omega_\Mset}$ is a norm and not a semi-norm, is given as Lemma \ref{lem:Psi-convex-via-norms} in the Appendix.

\begin{lemma}
Let $\Omega_\Mset\,$ be a convex VGF with $\Omega_\Mset \equiv \Omega_{\Mset\cap\Sbb_+}$. 
If $\partial \Omega_\Mset(X) \cap \R^{n\times m}_+ \neq \emptyset$ 
holds for any $X\geq \zero\,$, then $\Psi(X)=\Omega_\Mset(\abs{X})$ is convex. 
\end{lemma}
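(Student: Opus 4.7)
The plan is to invoke Corollary \ref{cor:Psi-convex}, which reduces the convexity of $\Psi(X) = \Omega_\Mset(|X|)$ to verifying the identity
\[
\Omega_\Mset(|X|) = \min_{Y \geq |X|}\; \Omega_\Mset(Y).
\]
The inequality $\min_{Y\geq |X|}\Omega_\Mset(Y) \leq \Omega_\Mset(|X|)$ is immediate by taking $Y=|X|$, so the real task is to show that $|X|$ in fact \emph{minimizes} $\Omega_\Mset$ over the entrywise-feasible set $\{Y : Y \geq |X|\}$.

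To do this, I would fix an arbitrary $X \in \R^{n\times m}$ and note that $|X| \geq 0$, so by hypothesis there exists a subgradient $G \in \partial\Omega_\Mset(|X|)$ with $G \geq 0$ entrywise. Then for any $Y \geq |X|$, the subgradient inequality for the convex function $\Omega_\Mset$ gives
\[
\Omega_\Mset(Y) \;\geq\; \Omega_\Mset(|X|) + \langle G,\, Y - |X|\rangle \;\geq\; \Omega_\Mset(|X|),
\]
where the last step follows because both $G$ and $Y - |X|$ are entrywise nonnegative, so their matrix inner product is nonnegative. Taking the infimum over $Y \geq |X|$ yields $\min_{Y \geq |X|}\Omega_\Mset(Y) \geq \Omega_\Mset(|X|)$, establishing the other direction of the identity.

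Combining the two inequalities gives the hypothesis of Corollary \ref{cor:Psi-convex}, so $\Psi(X) = \Omega_\Mset(|X|)$ is convex. I do not anticipate a real obstacle here; the lemma is essentially a restatement of KKT optimality for the lifting $\min_{Y \geq |X|}\Omega_\Mset(Y)$, where a nonnegative subgradient plays exactly the role of a Lagrange multiplier for the constraint $Y \geq |X|$. The only subtlety to double-check is that Proposition \ref{prop:subdiff-div} guarantees $\partial\Omega_\Mset(|X|)$ is nonempty at every $|X|$ in the (open) effective domain, which holds here since $\Omega_\Mset$ is finite-valued on $\R^{n\times m}$ by compactness of $\Mset$.
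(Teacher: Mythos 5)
Your proof is correct and follows essentially the same route as the paper: pick a nonnegative subgradient $G\in\partial\Omega_\Mset(|X|)$, use the subgradient inequality to conclude $\Omega_\Mset(Y)\geq\Omega_\Mset(|X|)$ for all $Y\geq|X|$, and then invoke Corollary~\ref{cor:Psi-convex}. The paper phrases the first step via the directional derivative (a supremum over the subdifferential) rather than a single subgradient, but the argument is the same.
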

\begin{proof}
Using the definition of subgradients for $\Omega$ at $\abs{X}$ we have
\[
\Omega(\abs{X}+\Delta) \geq \Omega(\abs{X}) + \sup \{\iprod{G}{\abs{X}+\Delta} :\; G\in \partial \Omega\text{ at } \abs{X} \},
\]
where the right-most term is the directional derivative of $\Omega$ at $\abs{X}$ in the direction $\Delta\,$. 
From the assumption, we get $\Omega(Y)\geq \Omega(\abs{X})$ for all $Y\geq \abs{X}\,$. Therefore, $\Psi(X)=\Omega_\Mset(\abs{X}) =  \min_{Y\geq \abs{X}}\; \Omega_\Mset(Y)\,$. Corollary \ref{cor:Psi-convex} establishes the convexity of $\Psi\,$. 
\end{proof}

For example, consider the VGF $\Omega_\Mset$ defined in~\eqref{eq:def-L1Gram}, 
and assume that it is convex.
Its subdifferential $\partial \Omega_\Mset$ given in \eqref{eq:subdiff-L1gram}. 
For each $X\geq 0$, the matrix product $X\Mbar\geq\zero$ since $\Mbar$ is also
a nonnegative matrix, hence it belongs to $\partial\Omega_\Mset(X)$. 
Therefore the condition in the above lemma is satisfied, 
and the function $\Psi(X)=\Omega_\Mset(|X|)$ is
convex and has an alternative representation 
$\Psi(X)=\min_{Y\geq \abs{X}}\; \Omega_\Mset(Y)\,$.  
This specific function $\Psi$ has been used in \cite{vervier2014learning}
for learning matrices with disjoint supports.

\section{Proximal operators}	\label{sec:prox}
The proximal operator of a closed convex function $h(\cdot)$ is defined as 
$
\prox{h}{\x} = \argmin_\bd{u} \; 
\{ h(\bd{u}) + \tfrac{1}{2}\norm{\bd{u}-\x}_2^2 \} 
$,  
which always exists and is unique (e.g., \cite[Section~31]{Rockafellar70book}).
Computing the proximal operator is the essential step in the proximal point algorithm (\cite{Martinet70ppa,Rockafellar76ppa}) 
and the proximal gradient methods (e.g., \cite{Nesterov13Composite}).
In each iteration of such algorithms, we need to compute $\prox{\tau h}$ 
where~$\tau>0$ is a step size parameter. 
To simplify the presentation, assume $\Mset\subset\Sbb_+^m$ and consider the associated VGF. Then, 
\begin{align}	\label{eq:omega-prox}
\prox{\,\tau\Omega}{X} 
= \adjustlimits\argmin_{Y}  \max_{\Mmat\in\Mset}~ \tfrac{1}{2}\norm{Y-X}_F^2 + \tau \tr(Y \Mmat Y^T)  .
\end{align}
Since $\Mset\subset \Sbb_+$ is a compact convex set, one can change the order of $\min$ and $\max$ 
and first solve for $Y$ in terms of any given~$X$ and~$M$, which gives
$Y=X(I+2\tau M)^{-1}$. 
Then we can find the optimal $M_0\in\Mset$ given $X$ as
\[
\Mmat_0 = \argmin_{\Mmat\in\Mset}\;    \tr\left(X  (\eye+ 2\tau \Mmat)^{-1}  X^T \right)
\]
which gives 
$\prox{\,\tau\Omega}{X} = X(\eye+2\tau\Mmat_0)^{-1}\,$. 
To compute the proximal operator for the conjugate function $\Omega^*\,$, one can use Moreau's formula (see, e.g., \cite[Theorem~31.5]{Rockafellar70book}):
\begin{equation}\label{eq:Moreau-theorem}
  \prox{\,\tau\Omega}{X} +\tau^{-1}\prox{\,\tau^{-1}\Omega^\st}{X} = X \,.
\end{equation}

Next we discuss proximal operators of norms induced by VGFs (section \ref{sec:related_norms}). 
Since computing the proximal operator of a norm is related to projection
onto the dual norm ball, 
i.e., $\prox{\,\tau\nor}{X} =X- \proj_{\nor^\st\leq\tau}(X)$, we can express the proximal operator of the norm
$\nor\equiv \sqrt{\Omega_\Mset(\cdot)}$ as
\begin{align*}
\prox{\,\tau\nor}{X} 
=X- \adjustlimits\argmin_Y \min_{\Mmat, C} 	\biggl\{ \norm{Y-X}_F^2 :\,  \tr(C) \leq \tau^2 ,\, \begin{bmatrix}  \Mmat & Y^T \\ Y & C\end{bmatrix} \succeq 0 ,\, \Mmat\in\Mset   \biggr\},
\end{align*}
using 
\eqref{eq:SDP}, \eqref{eq:dual-div}. Moreover, plugging \eqref{eq:dual-div} in the definition of proximal operator gives
\begin{align*}
\prox{\,\tau\nor_\Mset^\st}{X} 
&= \adjustlimits\argmin_Y\min_{\Mmat,C}\biggl\{  \norm{Y-X}_F^2 + \tau (\tr(C)+\gamma_\Mset(\Mmat)) :\; \begin{bmatrix} \Mmat& Y^T\\ Y & C \end{bmatrix} \succeq\zero  \biggr\},
\end{align*}
where $\gamma_\Mset(\Mmat) = \inf \{\lambda\geq 0:\; \Mmat\in\lambda \Mset\}$ is the gauge function associated to the nonempty convex set $\Mset\,$. 
The computational cost for computing proximal
operators can be high in general (involving solving semidefinite programs); 
however, 
they may be simplified for special cases of $\Mset\,$. 
For example, a fast algorithm for computing the proximal operator of
the VGF associated with the set $\Mset$ defined 
in~\eqref{eq:M-sliced-box-spectral} is presented in \cite{mcdonald2014new}.
For general problems, due to the convex-concave saddle point structure 
in~\eqref{eq:omega-prox}, we may use the mirror-prox algorithm
\cite{nemirovski2004prox} to obtain an inexact solution.

\paragraph{Left unitarily invariance and QR factorization} 
As mentioned before, VGFs and their conjugates are left unitarily invariant. We can use this fact to simplify the computation of corresponding proximal operators when $n\geq m\,$. Consider the QR decomposition of a matrix $Y=QR\,$ where $Q$ is an orthogonal matrix with $Q^TQ=QQ^T=\eye$ and $R=[R_Y^T ~ \zero]^T$ is an upper triangular matrix with $R_Y\in\R^{m\times m}$. From the definition, we have $\Omega(Y)= \Omega(R_Y)$ and $\Omega^\st(Y)= \Omega^\st(R_Y)$. 
For the proximal operators, we can simply plug in $R_X$ from the QR decomposition $X=Q[R_X^T~ \zero]^T$ to get
\begin{align*}
\prox{\,\tau\Omega^\st}{X} 
&= \adjustlimits\argmin_{Y} \min_{\Mmat,C}\;   \biggl\{ \norm{Y-X}_2^2 +\tfrac{1}{2}\tau\tr(C) :\; \begin{bmatrix}\Mmat& Y^T\\ Y& C\end{bmatrix} \succeq \zero \,,\; \Mmat\in\Mset \biggr\}\\
&= Q\cdot \adjustlimits\argmin_{R} \min_{\Mmat,C}\;   \biggl\{ \norm{R-R_X}_2^2 +\tfrac{1}{2}\tau\tr(C) :\; \begin{bmatrix}\Mmat& R^T\\ R& C\end{bmatrix} \succeq \zero \,,\; \Mmat\in\Mset \biggr\}
\end{align*}
where $R$ is constrained to the set of upper triangular matrices and the new PSD matrix is of size $2m$ instead of $n+m$ that we had before. The above equality uses two facts. First, 
\begin{align}\label{eq:rotation-R}
\begin{bmatrix} \eye_m &\zero\\\zero& Q^T\end{bmatrix}\begin{bmatrix}  \Mmat & Y^T \\ Y & C\end{bmatrix} \begin{bmatrix} \eye_m &\zero\\\zero& Q\end{bmatrix}= \begin{bmatrix} \Mmat & R^T\\ R& Q^T C Q \end{bmatrix}\succeq \zero 
\end{align}
where the right and left matrices in the multiplication are positive definite. 
Secondly, $\tr(C) = \tr(C')$ where $C' = Q^TCQ$ and assuming $C'$ to be zero outside the first $m\times m$ block can only reduce the objective function. Therefore, we can ignore the last $n-m$ rows and columns of the above PSD matrix.

More generally, because of left unitarily invariance, the optimal $Y$'s in all of the optimization problems in this section have the same column space as the input matrix $X$; otherwise, a rotation as in \eqref{eq:rotation-R} produces a feasible $Y$ with a smaller value for the objective function.

\section{Algorithms for optimization with VGF} \label{sec:opt-algm}

In this section, we discuss optimization algorithms for solving convex minimization problems, in the form of~\eqref{eq:loss+omega}, with VGF penalties.
The proximal operators of VGFs we studied in the previous section are the
key parts of proximal gradient methods 
(see, e.g., \cite{BeckTeboulle09, BeckTeboulle10, Nesterov13Composite}).
More specifically, when the loss function~$\Loss(X)$ is smooth, 
we can iteratively update the variables $X^{(t)}$ as follows:
\[
  X^{(t+1)} = \prox{\gamma_t\Omega}{X^{(t)} - \gamma_t \nabla\Loss(X^{(t)})},
  \qquad t=0, 1, 2, \ldots,
\]
where $\gamma_t$ is a step size at iteration~$t$.
When~$\Loss(X)$ is not smooth, then we can use subgradients of $\Loss(X^{(t)})$
in the above algorithm, or use the classical subgradient method on the 
overall objective $\Loss(X)+\lambda\Omega(X)$.
In either case, we need to use diminishing step size and 
the convergence can be very slow. 
Even when the convergence is relatively fast 
(in terms of number of iterations), the computational cost of the 
proximal operator in each iteration can be very high.

In this section, we focus on loss functions that have a special form shown in \eqref{eq:loss-variational}. 
This form comes up in many common loss functions, some of which listed later in this section, and allows for faster algorithms. 
We assume that the loss function~$\Loss$ in~\eqref{eq:loss+omega} has the
following representation:
\begin{equation}	\label{eq:loss-variational}
\Loss(X) = \max_{\g\in\cal{G}}\;\; \iprod{X}{\cal{D}(\g)} - \Lhat(\g) \,,
\end{equation}
where $\Lhat :\R^p\to\R$ is a convex function, 
$\cal{G}$ is a convex and compact subset of~$\R^p$, 
and $\cal{D}:\R^p\to\R^{n\times m}$ is a linear operator. 
This is also known as a Fenchel-type representation 
(see, e.g.,  \cite{juditsky2013solving}). Moreover, 
consider the infimal post-composition \cite[Definition~12.33]{bauschke2011convex} of 
$\Lhat:\cal{G}\to\R$ by $\cal{D}(\cdot)\,$, defined as 
\begin{align*}
(\cal{D} \triangleright \Lhat)(Y) = \inf\,\{\Lhat(G):\; \cal{D}(G)= Y\,,\; G\in\cal{G}\} \,.
\end{align*}
Then, the conjugate to this function is equal to $\Loss\,$. 
In other words, $\Loss(X) = \Lhat^\st(\cal{D}^\st(X))$ where $\Lhat^\st$ is the conjugate function and $\cal{D}^\st$ is the adjoint operator. 
The composition of a nonlinear convex loss function and a linear operator 
is very common for optimization of linear predictors in 
machine learning (e.g., \cite{HTFbook}),
which we will demonstrate with several examples later in this section. 

With the variational representation of~$\Loss$ in \eqref{eq:loss-variational}, and assuming $\Omega_\Mset \equiv \Omega_{\Mset\cap \Sbb_+}$, we can write the VGF-penalized loss minimization problem \eqref{eq:loss+omega}
as a convex-concave saddle-point optimization problem:
\begin{align} \label{eq:loss+omega-var} 
J_\op{opt} = \min_{X} \max_{\Mmat\in\Mset\cap\Sbb_+, \,\g\in\cal{G}}\;\; 
 \iprod{X}{\cal{D}(\g)} - \Lhat(\g) + \lambda\tr(X\Mmat X^T) \,.
\end{align} 
If $\Lhat$ is smooth (while $\Loss$ may be nonsmooth) and the sets
$\cal{G}$ and $\Mset$ are simple (e.g., admitting simple projections), 
we can solve problem~\eqref{eq:loss+omega-var} using the 
\emph{mirror-prox} algorithm \cite{nemirovski2004prox,juditsky2013solving}.
In section~\ref{sec:mirror-prox}, we present a variant of the
mirror-prox algorithm equipped with an adaptive line search scheme. 
Then in Section \ref{sec:reduced-form}, we present a preprocessing technique
to transform problems of the form \eqref{eq:loss+omega-var} 
into smaller dimensions, which can be solved more efficiently
under favorable conditions.

Before diving into the algorithmic details, 
we examine some common loss functions and derive the corresponding 
representation~\eqref{eq:loss-variational} for them. 
This discussion will provide intuition for the linear operator~$\cal{D}$ and the set~$\cal{G}$ in relation with data and prediction. 

\paragraph{Norm loss} 
Given a norm $\nor$ and its dual $\nor^\st\,$, consider the squared norm loss 
\[
\Loss(\x) = \tfrac{1}{2} \norm{A\x-\bd{b}}^2 
= \max_{\g}\;  \iprod{\g}{A\x-\bd{b}} - \tfrac{1}{2}(\norm{\g}^\st)^2\,.
\]
In terms of the representation in~\eqref{eq:loss-variational}, 
here we have $\cal{D}(\g) = A^T\g\,$ and
$\Lhat(\g) = \tfrac{1}{2}(\norm{\g}^\st)^2 + \bd{b}^T\g$.
Similarly, a norm loss can be represented as
\[
\Loss(\x) = \norm{A\x-\bd{b}} = \max_\g \; \{ \iprod{\x}{A^T\g} - \bd{b}^T\g :\;  \norm{\g}^\st \leq 1\},
\]
where we have $\cal{D}(\g)=A^T \g$, $\Lhat(\g)=\bd{b}^T\g$ and 
$\cal{G}=\{\g:\|\g\|^*\leq 1\}$.

\paragraph{$\eps$-insensitive (deadzone) loss} 
Another variant of the absolute loss function is called the $\eps$-insensitive loss (e.g., see \cite[Section~14.5.1]{murphy2012machine} for more details and applications) and can be represented, similar to \eqref{eq:loss-variational}, as
\begin{align*}
\Loss_\eps(x) = (\abs{x}-\eps)_+ &= \max_{\alpha,\beta}\; \{ \alpha(x-\eps) + \beta(-x-\eps):\; \alpha,\beta \geq 0, \; \alpha+\beta\leq 1  \} .
\end{align*}

\paragraph{Hinge loss for binary classification} 
In binary classification problems, we are given a set of training examples
$(\bd{a}_1, b_1), \ldots, (\bd{a}_N, b_N)$, where each $\bd{a}_s\in\R^p$ 
is a feature vector and $b_s\in\{+1, -1\}$ is a binary label.
We would like to find $\x\in\R^p$ such that the linear function 
$\bd{a}_s^T \x$ can predict the sign of label~$b_s$ for each $s=1,\ldots,N$.
The hinge loss $\max\{0, 1-b_s(\bd{a}_s^T \x)\}$ returns~$0$ if
$b_s(\bd{a}_s^T \x)\geq 1$ and a positive loss growing with the absolute
value of $b_s(\bd{a}_s^T \x)$ when it is negative.
The average hinge loss over the whole data set can be expressed as
\[
\Loss(\x) 
~= ~\frac{1}{N}\textstyle\sum_{s=1}^N \max\left\{0, 1-b_s (\bd{a}_s^T \x) \right\}
~= ~\max_{\g\in\cal{G}}\; \iprod{\g}{\one-\bd{D}\x} \,.
\]
where $\bd{D} = [b_1\bd{a}_1,\,\ldots,\, b_N \bd{a}_N ]^T\,$. 
Here, in terms of~\eqref{eq:loss-variational},
we have, $\cal{G} = \{\g\in\mathbb{R}^N:\; 0\leq g_s\leq 1/N \,\}\,$, $\mathcal{D}(\g) = -\bd{D}^T\g\,$, and $\Lhat(\g) = -\one^T\g\,$. 

\paragraph{Multi-class hinge loss} 
For multiclass classification problems, each sample $\bd{a}_s$ has a label
$b_s\in\{1,\ldots,m\}$, for $s=1,\ldots,N$.
Our goal is to learn a set of classifiers $\x_1,\ldots, \x_m$, 
that can predict the labels $b_s$ correctly. 
For any given example $\bd{a}_s$ with label $b_s$, 
we say the prediction made by $\x_1,\ldots,\x_m$ is correct if
\begin{equation}\label{eq:multiclass-constraints}
  \x_i^T\bd{a}_s \geq \x_j^T\bd{a}_s \quad \text{for all } (i, j) \in\cal{I}(b_s),
\end{equation}
where $\cal{I}_{k}\,$, for $k=1,\ldots,m\,$, characterizes the required 
comparisons to be made for any example with label~$k$.
Here are two examples.
\begin{remunerate}
  \item 
  \emph{Flat multiclass classification:} $\cal{I}(k) = \{(k, j): j\neq k\}$. 
  In this case, the constraints in~\eqref{eq:multiclass-constraints} are 
  equivalent to the label $b_s = \argmax_{i\in\{1,\ldots,m\}} \x_i^T \bd{a}_s$;
  see \cite{WestonWatkins}.
  \item 
  \emph{Hierarchical classification.} In this case, the labels $\{1,\ldots,m\}$
  are organized in a tree structure, and each $\cal{I}(k)$ is a special subset
  of the edges in the tree depending on the class label~$k$; 
  see Section~\ref{sec:hier-class} and 
  \cite{DekKesSin,ZhouXiaoWu11} for further details.
\end{remunerate}

Given the labeled data set $(\bd{a}_1,b_1),\ldots,(\bd{a}_N, b_N)$, we can 
optimize $X=[\x_1,\ldots,\x_m]$ to minimize the averaged multi-class hinge loss
\begin{equation}\label{eq:multiclass-hinge}
  \Loss(X) = 
\frac{1}{N}\textstyle\sum_{s=1}^N\,\max \bigl\{0, 1- \max_{(i,j)\in \cal{I}(b_s)} 
\{\x_i^T\ba_s - \x_j^T\ba_s\}\bigr\},
\end{equation}
which penalizes the amount of violation for the inequality constraints
in~\eqref{eq:multiclass-constraints}.

In order to represent the loss function in~\eqref{eq:multiclass-hinge}
in the form of~\eqref{eq:loss-variational}, we need some more notations.
Let $p_k= \abs{\cal{I}(k)}$, and define $E_k\in\R^{m\times p_k}$ as the 
incidence matrix for the pairs in $\cal{I}_k\,$;  i.e., 
each column of $E_k$, corresponding to a pair $(i,j)\in\cal{I}_k$, has only 
two nonzero entries: $-1$ at the $i$th entry and $+1$ at the $j$th entry.
Then the $p_k$ constraints in \eqref{eq:multiclass-constraints}
can be summarized as $ E_k^T X^T \bd{a}_s \leq \zero$.
It can be shown that the multi-class hinge loss $\Loss(X)$ 
in~\eqref{eq:multiclass-hinge} can be represented in the 
form~\eqref{eq:loss-variational} via
\[
\cal{D}(\g) = -A \, \cal{E}(\g), \qquad \text{and} \qquad 
\Lhat(\g) = -\one^T\g ,
\]
where $A=[\bd{a}_1~\cdots~\bd{a}_N]$ and 
$\cal{E}(\g) = [E_{b_1}\g_1~\cdots~ E_{b_N}\g_N]^T \in\R^{N\times m}$. 
Moreover, the domain of maximization in~\eqref{eq:loss-variational} 
is defined as
\begin{equation}	\label{eq:Gc-hinge}
\cal{G} = \cal{G}_{b_1}\times \ldots \times \cal{G}_{b_N} \quad 
\text{where} \quad 
\cal{G}_k =  \{\g\in\R^{p_k} \,:\; \g \geq 0 \;,\; \one^T \g \leq 1/N \}\,.
\end{equation}
Combining the above variational form for multi-class hinge loss and a VGF as
penalty on~$X$, we can reformulate the nonsmooth convex optimization problem
  $\min_X\; \{\Loss(X) + \lambda \Omega_\Mset(X) \}$
as the convex-concave saddle point problem
\begin{equation}	\label{eq:hinge-divnorm-saddle}
\adjustlimits\min_{X} \max_{\Mmat\in\Mset\cap\Sbb_+,\, \g\in\cal{G}}\; 
\one^T\g - \iprod{X}{A\, \cal{E}(\g)}  + \lambda\tr(X\Mmat X^T) .
\end{equation}

\subsection{Mirror-prox algorithm with adaptive line search}
\label{sec:mirror-prox}

The mirror-prox (MP) algorithm was proposed by Nemirovski
\cite{nemirovski2004prox} for approximating the saddle points of smooth
convex-concave functions and solutions of variational inequalities with
Lipschitz continuous monotone operators. 
It is an extension of the extra-gradient method \cite{Korpelevic76},
and more variants are studied in \cite{JuditskyNemirovski11chapter6}. 
In this section, we first present a variant of the MP algorithm equipped with
an adaptive line search scheme. 
Then explain how to apply it to solve the VGF-penalized loss minimization 
problem~\eqref{eq:loss+omega-var}.

\newcommand{\Zc}{\cal{Z}}
\newcommand{\Es}{\mathscr{E}}
We describe the MP algorithm in the more general setup of solving variational
inequality problems.
Let~$\Zc$ be a convex compact set in Euclidean space~$\Es$ equipped with inner
product $\langle\cdot,\cdot\rangle$, and $\|\cdot\|$ and $\|\cdot\|_*$ be a pair of dual norms on~$\Es$,
i.e., $\|\xi\|_*=\max_{\|z\|\leq 1}\langle\xi,z\rangle$. 
Let $F:\Zc\to \Es$ be a Lipschitz continuous monotone mapping:
\begin{align}
	    \forall\, z, z'\in \Zc: \;\; \|F(z)-F(z')\|_*\leq L\|z-z'\|, \;\; \text{and, }\;  \langle F(z)-F(z'),z-z'\rangle\geq 0\,.
	    \label{eqn:Lipschitz-monotone}
\end{align}
The goal of the MP algorithm is to approximate a
(strong) solution to the variational inequality associated with $(\Zc,F)$: 
$\langle F(z^\star), z - z^\star \rangle \geq 0, \; \forall\, z\in \Zc$. 
Let $\phi(x,y)$ be a smooth function that is convex in~$x$ and concave in~$y$,
and~$\cal{X}$ and~$\cal{Y}$ be closed convex sets. Then the convex-concave saddle point problem
\begin{equation*} %\label{eqn:saddle-point}
    \min_{x\in \cal{X}} \max_{y\in \cal{Y}} ~\phi(x,y), 
\end{equation*}
can be posed as a variational inequality problem with
$z=(x,y)$, $\Zc=\cal{X}\times \cal{Y}$ and 
\begin{equation}\label{eq:grad-mapping}
    F(z) = \left[ \begin{array}{rr}
            \nabla_x\phi(x,y) \\ -\nabla_y\phi(x,y) \end{array}
    \right].
  \end{equation}

The setup of the mirror-prox algorithm requires a distance-generating function
$h(z)$ which is compatible with the norm $\|\cdot\|$. In other words, 
$h(z)$ is subdifferentiable on the relative interior of~$\Zc$, denoted $\Zc^o$,
and is strongly convex with modulus~1 with respect to~$\|\cdot\|$, i.e., 
for all $z, z'\in \Zc$, we have $\langle \nabla h(z) -\nabla h(z'), z-z' \rangle \geq \|z-z'\|^2$. 
For any $z\in \Zc^o$ and $z'\in \Zc$, we can define the Bregman divergence at~$z$ as
\[
    V_z(z') = h(z') - h(z) - \langle\nabla h(z), z'-z\rangle ,
\]
and the associated proximity mapping as
\begin{align*}
  \proxb{z}{\xi}
~=~ \argmin_{z'\in \Zc} \left\{ \langle \xi, z'\rangle + V_z(z') \right\} 
~=~ \argmin_{z'\in \Zc} \left\{ \langle \xi-\nabla h(z), z'\rangle+h(z')\right\}.
\end{align*}
With these definitions, we are now ready to present the MP algorithm
in Figure~\ref{alg:mirror-prox}.
Compared with the original MP algorithm 
\cite{nemirovski2004prox,JuditskyNemirovski11chapter6}, our variant 
employs an adaptive line search procedure to 
determine the step sizes $\gamma_t\,$, for $t=1,2,\ldots\,$.
We can 
exit the algorithm whenever $V_{z_t}(z_{t+1}) \leq \epsilon$
for some $\epsilon>0$.
Under the assumptions in~\eqref{eqn:Lipschitz-monotone}, 
the MP algorithm in Figure~\ref{alg:mirror-prox} enjoys the same $O(1/t)$ 
convergence rate as the one proposed in \cite{nemirovski2004prox}, but 
performs much faster in practice.
The proof requires only simple modifications of the proof in 
\cite{nemirovski2004prox,JuditskyNemirovski11chapter6}. 
\begin{figure}[ht]
\begin{center}
\fbox{
\begin{minipage}[c][14em][c]{0.5\textwidth}{
{\bf Algorithm:} Mirror-Prox$(z_1,\gamma_1,\eps)$ \vskip-.8em
\begin{algorithmic}% [1]
\REPEAT
	\STATE $t:=t+1$
	\REPEAT
  \STATE $\gamma_t := \gamma_t/c_\mathrm{dec}$
		\STATE $w_t := \proxb{z_t}{\gamma_t F(z_t)}$
		\STATE $z_{t+1} := \proxb{z_t}{\gamma_t F(w_t)}$
	\UNTIL {$\delta_t\leq 0$ }
  \STATE $\gamma_{t+1}:= c_\mathrm{inc} \gamma_t$
\UNTIL {$V_{z_t}(z_{t+1}) \leq \eps$}
\RETURN $\bar z_t := (\sum_{\tau=1}^t \gamma_\tau)^{-1} \sum_{\tau=1}^t \gamma_\tau w_\tau$
\end{algorithmic}
}\end{minipage}}
\end{center}
\caption{Mirror-Prox algorithm with adaptive line search. 
  Here $c_\mathrm{dec}>1$ and $c_\mathrm{inc}>1$ are parameters 
  controlling the decrease and increase of the step size~$\gamma_t$ 
  in the line search trials. 
  The stopping criterion for the line search is $\delta_t\leq 0$ where
   $\delta_t = \gamma_t \langle F(w_t), w_t-z_{t+1} \rangle
   - V_{z_t}(z_{t+1}) \,$.}
\label{alg:mirror-prox}
\end{figure}

When $\Lhat$ is smooth and $\Omega_\Mset\equiv \Omega_{\Mset\cap\Sbb_+}$, we can apply MP algorithm to solve the saddle-point problem in~\eqref{eq:loss+omega-var}. 
Then, the gradient mapping in~\eqref{eq:grad-mapping} becomes
\begin{align}\label{eq:F-MP-vgf}
F(X,\Mmat,\g) =  \begin{bmatrix} \op{vec}(2\lambda X \Mmat +\cal{D}(\g)) \\ -\lambda\op{vec}(X^TX) \\ \op{vec}(\nabla \Lhat(\g) - \cal{D}^\st(X))  \end{bmatrix} , 	
\end{align}
where $\cal{D}^\st(\cdot)$ is the adjoint operator to $\cal{D}(\cdot)$. 
Assuming $\g\in\R^p\,$, computing $F$ requires $O(nm^2+nmp)$ operations for
matrix multiplications. In Section \ref{sec:reduced-form}, we present a method that can potentially reduce the problem size by replacing $n$ with $\min\{mp,n\}\,$. 
In the case of SVM with the hinge loss 
as in our real-data numerical example, one can replace $n$ by
$\min\{N,mp,n\}\,$, where $N$ is the number of samples.

The assumption $\Omega_\Mset\equiv \Omega_{\Mset\cap\Sbb_+}$ provides us with a convex-concave saddle point optimization problem in \eqref{eq:loss+omega-var}. 
However, mirror-prox iterations for \eqref{eq:loss+omega-var} require a projection onto $\Mset\cap\Sbb_+$ (or more generally, computation of the proximity mapping $\proxb{z}{\xi}$ corresponding to the mirror map we use and a set $\Zc$ defined via $\Mset\cap\Sbb_+$), and such projections might be much more complicated than projection onto $\Mset$. 
In fact, while $\Omega_\Mset\equiv \Omega_{\Mset\cap\Sbb_+}$ implies that the achieving matrix in $\sup_{\Mmat\in\Mset} \langle\Mmat, X^TX \rangle$ is always in $\Mset\cap\Sbb_+$, we need a separate guarantee to be able to project onto $\Mset$ and $\Mset\cap\Sbb_+$ interchangeably. 
We remark on a guarantee for this in the following, where Lemma \ref{lem:proj-Z} and Corollary \ref{cor:proj-PSD} provide sufficient conditions for when projection of a PSD matrix onto $\Mset$ is equivalent to projection onto $\Mset \cap \Sbb_+$. 
\begin{lemma}\label{lem:proj-Z}
For any $G\succeq 0$, consider $P = \proj_\Mset (G)$ and its Moreau decomposition with respect to the positive semidefinite cone as $P = P_{+} - P_{-}$ where $P_{+} , P_{-} \succeq 0$ and $\langle P_{+} , P_{-}\rangle =0 $. Then, $P_{+}\in\Mset$ implies $P_{-} =0\,$. 
\end{lemma}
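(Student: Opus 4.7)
The plan is to exploit the variational characterization of $P$ as the nearest point of $\Mset$ to $G$, together with the two orthogonality/positivity facts coming from the Moreau decomposition and the PSD-ness of $G$. Concretely, since the hypothesis furnishes $P_+$ as a competing point of $\Mset$, I would simply compare $\|G-P\|_F^2$ with $\|G-P_+\|_F^2$ and show that the slack is a sum of nonnegative terms that must therefore vanish.

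First I would write the defining optimality inequality for the Euclidean projection onto the convex set $\Mset$:
\[
\|G-P\|_F^2 \;\leq\; \|G-Q\|_F^2 \qquad \text{for every } Q\in\Mset,
\]
and specialize to $Q=P_+\in\Mset$, which is admissible by assumption. Next, using the Moreau decomposition $P=P_+-P_-$, I would expand
\[
\|G-P\|_F^2 \;=\; \|(G-P_+)+P_-\|_F^2 \;=\; \|G-P_+\|_F^2 + 2\langle G-P_+,P_-\rangle + \|P_-\|_F^2.
\]
Plugging into the projection inequality yields
\[
2\langle G-P_+,P_-\rangle + \|P_-\|_F^2 \;\leq\; 0.
\]

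The crux of the argument is then to observe that both $\langle G-P_+,P_-\rangle$ and $\|P_-\|_F^2$ are nonnegative. For the first, the Moreau orthogonality gives $\langle P_+,P_-\rangle=0$, so the cross term reduces to $\langle G,P_-\rangle$; and since $G\succeq 0$ and $P_-\succeq 0$ we have $\langle G,P_-\rangle\geq 0$ (the trace inner product of two PSD matrices is nonnegative). Hence the displayed inequality forces $\|P_-\|_F^2=0$, i.e., $P_-=0$, which is exactly the conclusion.

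I do not anticipate a significant obstacle here; the only subtle point is recognizing that $P_+$ is a legitimate competitor in the projection problem (guaranteed by the hypothesis) and that the Moreau decomposition together with $G\succeq 0$ kills the cross term's sign. Everything else is a one-line expansion of a squared Frobenius norm.
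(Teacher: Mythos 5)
Your proof is correct and follows essentially the same route as the paper's: both compare $P$ against the competitor $P_+\in\Mset$, use the Moreau orthogonality $\langle P_+,P_-\rangle=0$ to isolate the term $\langle G,P_-\rangle$, and conclude from $\langle G,P_-\rangle\ge 0$ (inner product of PSD matrices) that $\|P_-\|_F^2=0$. The only cosmetic difference is that you invoke the distance-minimizing characterization of the projection while the paper invokes firm nonexpansiveness, which yields the same inequality up to a factor of two on the cross term.
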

\begin{proof}
	Recall the firm nonexpansive property of the projection operator onto a convex set \cite{rockafellar2009variational} applied to $P = \proj_\Mset (G)$ and $P_{+} = \proj_\Mset (P_{+})$ (implied by $P_+\in\Mset$). We get $\norm{P - P_{+}}_F^2 \leq \langle P - P_{+} , G - P_{+}\rangle$ 
		which implies $\langle P_{-} , G \rangle + \norm{P_{-}}_F^2 \leq 0$. 
		Moreover, for two PSD matrices $G$ and $P_{-}$ we have $\langle G, P_{-} \rangle \geq 0$. All in all, $P_{-} =0$. 
\end{proof}
\begin{corollary}\label{cor:proj-PSD}
	Provided that for any $\Mmat\in\Mset$ 
	we have $\Mmat_+\in\Mset$, then $\Omega_\Mset$ is convex. Moreover, $\proj_\Mset (G) \succeq 0$ for all $G\succeq 0$. 
\end{corollary}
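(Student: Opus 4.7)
The plan is to derive both assertions as direct consequences of the two results already established in this subsection, namely Lemma \ref{lem:lcal-proj} and Lemma \ref{lem:proj-Z}.

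For the convexity claim, I would invoke Lemma \ref{lem:lcal-proj}. The hypothesis of the corollary states that $\Mmat_+ \in \Mset$ for every $\Mmat \in \Mset$, so we may simply take $\Mmat' := \Mmat_+$, and the relation $\Mmat_+ \preceq \Mmat'$ holds trivially. This verifies the sufficient condition in Lemma \ref{lem:lcal-proj}, so $\Omega_\Mset$ is convex.

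For the projection claim, fix an arbitrary $G \succeq 0$ and set $P := \proj_\Mset(G)$. Since $P \in \Mset$ by definition of the projection, the standing hypothesis gives $P_+ \in \Mset$, where $P = P_+ - P_-$ is the Moreau decomposition of $P$ with respect to the PSD cone. Applying Lemma \ref{lem:proj-Z} directly yields $P_- = 0$, and hence $P = P_+ \succeq 0$, as claimed.

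I do not anticipate any real obstacle here: the corollary is essentially a packaging step that observes that the slightly stronger hypothesis ``$\Mmat_+ \in \Mset$'' simultaneously fulfills (i) the condition ``there exists $\Mmat' \in \Mset$ with $\Mmat_+ \preceq \Mmat'$'' required by Lemma \ref{lem:lcal-proj}, and (ii) the condition ``$P_+ \in \Mset$'' required by Lemma \ref{lem:proj-Z}. The only subtle point to be careful about is to apply Lemma \ref{lem:proj-Z} to the projection $P$ itself (rather than to $G$), so that the needed membership $P_+ \in \Mset$ follows from the hypothesis being applied to $P \in \Mset$.
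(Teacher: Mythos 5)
Your proof is correct and matches the paper's intended argument: the corollary is stated without an explicit proof precisely because it follows by taking $\Mmat'=\Mmat_+$ in Lemma \ref{lem:lcal-proj} for convexity, and by applying Lemma \ref{lem:proj-Z} to $P=\proj_\Mset(G)\in\Mset$ (so that $P_+\in\Mset$ by hypothesis) for the projection claim. Your closing remark about applying Lemma \ref{lem:proj-Z} to $P$ rather than $G$ is exactly the right point of care.
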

Corollary \ref{cor:proj-PSD} establishes an important property about the iterates of the mirror-prox algorithm with $h(\cdot) = \frac{1}{2}\nor_2^2$ as the mirror map, corresponding to $\proxb{z}{\xi} = \proj_{\Zc}(z-\xi)$. 
If in Algorithm \ref{alg:mirror-prox} we initialize the part of $z_1$ corresponding to $\Mmat$'s to be a PSD matrix, all of such parts in the iterations $z_t$ and $w_t$ remain PSD as~1)~we add a PSD matrix ($\lambda X^TX$ from \eqref{eq:F-MP-vgf}) to the previous iteration, and,~2)~the projection onto $\Mset$ (which is not necessarily a subset of the PSD cone) ends up being a PSD matrix (by Corollary \ref{cor:proj-PSD}), hence it is equivalent to projection onto $\Mset\cap\Sbb_+$. Notice that such condition is required for applying the mirror-prox algorithm: the objective has to be convex-concave and the positive semidefiniteness of all iterations guarantees this property.

The above provides a glimpse into a more general approach in optimization with composite functions. While every convex function has a variational representation in terms of its conjugate function, namely $\Omega_\Mset(X) = \sup_Y~\langle X,Y \rangle  -  \Omega_{\Mset}^\star(Y)$, such expressions do not necessarily offer any computational advantage. 
With a more clever exploitation of the structure, 
$\Omega_\Mset(X)$ can be seen as a composition of the support function $S_\Mset(\cdot)$ with a structure mapping $g(X) = X^TX$, as in \eqref{eqn:supp-function}. Then, 
\begin{align*}
  \min_X ~~ \Loss(X) + \Omega_{\Mset}(X)
  &~~ \equiv ~~ 
  \adjustlimits\min_X \sup_Y ~~ \Loss(X) + \langle g(X),Y \rangle  -  S_{\Mset}^\star(Y)\\
  &~~ \equiv ~~ 
  \adjustlimits\min_X \sup_{Y\in \Mset} ~~ \Loss(X) + \langle X^TX,Y \rangle
\end{align*}
where we use the fact that $S_{\Mset}^\star(Y)$ is the indicator function for the set $\Mset$.  This can be seen as an interpretation %explanation
for how our proposed algorithm replaces proximal mapping computations for $\Omega_\Mset$ with projections onto $\Mset$ (proximal mapping for the indicator function for $\Omega_\Mset$). Of course, to be able to use convex optimization algorithms, we will need to establish results similar to Lemma \ref{lem:proj-Z} and Corollary \ref{cor:proj-PSD}.

\newcommand{\opt}{\op{opt}}
\subsection{A Kernel Trick (Reduced Formulation)} \label{sec:reduced-form}
As we discussed earlier, when the loss function has 
the structure~\eqref{eq:loss-variational}, we can write  the VGF-penalized
minimization problem as a convex-concave saddle point problem
\begin{align}	\label{eq:saddle-omega}
J_{\text{opt}} 
= \adjustlimits\min_{X\in\R^{n\times m}} \max_{\g\in\cal{G}}\;\; 
 \iprod{X}{\cal{D}(\g)} - \Lhat(\g) +\lambda\,\Omega(X) \,.
\end{align}
Since $\cal{G}$ is compact, $\Omega$ is convex in $X\,$, and $\Lhat$ 
is convex in $\g\,$, we can use the minimax theorem 
to interchange the $\max$ and $\min$. Then, for any orthogonal matrix $Q$ we have
\begin{align}
J_\opt 
&= \adjustlimits\max_{\g\in\cal{G}} \min_X\;\;  \iprod{X}{\cal{D}(\g)} - \Lhat(\g) +\lambda\,\Omega(X)  \nonumber\\
&=\adjustlimits \max_{\g\in\cal{G}} \min_X\;\;  \iprod{Q^TX}{Q^T\cal{D}(\g)} - \Lhat(\g) +\lambda\,\Omega(Q^TX)  \nonumber\\
&=\adjustlimits \max_{\g\in\cal{G}} \min_X\;\;  \iprod{X}{Q^T\cal{D}(\g)} - \Lhat(\g) +\lambda\,\Omega(X)  \label{eq:saddle-Q-reduction}
\end{align}
where the second equality is due to the left unitarily invariance of $\Omega\,$, and we renamed the variable $X$ to get the third equality. Observe that $Q$ is an arbitrary orthogonal matrix in \eqref{eq:saddle-Q-reduction} and can be chosen in a clever way to simplify $\cal{D}$ as described in the sequel. 
Since $\cal{D}(\g)$ is linear in $\g\,$, consider a representation as
\begin{align}\label{eq:D-linear-kronecker}
\cal{D}(\g) = [D_1\g ~\cdots~ D_m\g] =  [D_1 ~\cdots~ D_m] (\eye_m \otimes \g) = \bd{D}(\eye_m \otimes \g), 
\end{align}
for some $D_i\in\R^{n\times p}$ and $\bd{D}\in\R^{n\times mp}\,$. Then, express $\bd{D}$ as the product of an orthogonal matrix and a residue matrix, such as in QR decomposition $\bd{D} = QR\,$, where provided that $n>mp\,$, only the first $mp$ rows of $R$ can be nonzero (will be denoted by $R_1$). Define $\cal{D}'(\g) = R_1(\eye_m\otimes \g)\in\R^{q\times m}$ for $q=\min\{mp,n\}\,$. 
Plugging the above choice of $Q$ in \eqref{eq:saddle-Q-reduction} gives 
\begin{align*}
J_\opt 
&= \adjustlimits\max_{\g\in\cal{G}} \min_{X_1,X_2}\;\; \iprod{ \begin{bmatrix}X_1 \\ X_2 \end{bmatrix}  }{ \begin{bmatrix}\cal{D}'(\g) \\ \zero \end{bmatrix} } - \Lhat(\g) +\lambda\,\Omega( \begin{bmatrix}X_1 \\ X_2 \end{bmatrix}  ) \,.
\end{align*}
Observe that setting $X_2$ to zero does not increase the value of $\Omega$ which allows for restricting the above to the subspace $X_2=0$ and getting
\begin{equation}	\label{eq:reduced-vgf}
J_\opt = \adjustlimits\min_{X\in\R^{q\times m}} \max_{\g\in\cal{G}}\; \iprod{X}{\cal{D}'(\g)} - \Lhat(\g) + \lambda\Omega(X)
\end{equation}
whose $X$ variable has $q=\min\{mp,n\}$ rows compared to $n$ rows in \eqref{eq:loss+omega}.

Notice that while the evaluation of $J_\opt$ via \eqref{eq:reduced-vgf} can potentially be more efficient, we are interested in recovering an {\em optimal point $X$} in \eqref{eq:saddle-omega} which is different from the optimal points in \eqref{eq:reduced-vgf}. 
Tracing back the steps we took 
from \eqref{eq:saddle-omega} to \eqref{eq:reduced-vgf}, we get
\begin{align*}
X_\opt^{\eqref{eq:saddle-omega}} = Q \begin{bmatrix} X_\opt^{\eqref{eq:reduced-vgf}} \\ \zero \end{bmatrix} \,.
\end{align*}

{
The special case of regularization with squared Euclidean norm has been understood and used before; e.g., see \cite{representer-2001}. 
However, the above derivations show that we can get similar results when the regularization can be represented as a maximum of squared weighted Euclidean norms. 
}

{
It is worth mentioning that the reduced formulation in~\eqref{eq:reduced-vgf} can be similarly derived via a dual approach; one has to take the dual of the loss-regularized optimization problem (e.g., see Example 11.41 in \cite{rockafellar2009variational}), use the left unitarily invariance of the conjugate VGF to reduce $\cal{D}$ to $\cal{D}'$, and dualize the problem again, to get~\eqref{eq:reduced-vgf}.
}

\subsection{A Representer Theorem}\label{sec:repr-thm}
A general loss-regularized optimization problem as in \eqref{eq:loss+omega} where the loss admits a Fenchel-type representation and the regularizer is a strongly convex VGF (including all squared vector norms) enjoys a representer theorem (see, e.g., \cite{representer-2001}). More specifically, the optimal solution is linearly related to the linear operator $\cal{D}$ in the representation of the loss. As mentioned before, for many common loss functions, $\cal{D}$ encodes the samples, which reduces the following proposition to the usual representer theorem. 

\begin{proposition}
For a loss-regularized minimization problem as in \eqref{eq:loss+omega}  
where $\Mset \subset \Sbb_{++}^m$ 
and $\Loss$ admits a Fenchel-type representation as 
\[
\Loss(X) = \max_{\g\in\cal{G}}\;\; \iprod{X}{\cal{D}(\g)} - \Lhat(\g) 
=  \max_{\g\in\cal{G}}\;\; \iprod{X}{ \bd{D}(\eye_m \otimes \g)  } - \Lhat(\g) \,,
\]
the optimal solution $X_\opt$ admits a representation of the form 
\[
X_\opt = \bd{D} C
\]
with a coefficient matrix $C$ given by $C = -\tfrac{1}{2\lambda}\Mmat_\opt^{-1} \otimes \g_\opt$ (optimal solutions of \eqref{eq:loss+omega-var}). 
\end{proposition}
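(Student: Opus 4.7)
The plan is to rewrite the problem as the saddle-point program \eqref{eq:loss+omega-var}, swap the order of minimization and maximization, and then read off the optimal $X$ from the first-order optimality condition of the now-quadratic inner minimization.

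More concretely, I would first combine the Fenchel-type representation of $\Loss$ with the variational definition of $\Omega_\Mset$ to write
\begin{equation*}
J_\opt ~=~ \adjustlimits\min_{X}\max_{\Mmat\in\Mset,\,\g\in\cal{G}}\;\;
\iprod{X}{\cal{D}(\g)} - \Lhat(\g) + \lambda \tr(X\Mmat X^T).
\end{equation*}
Since $\Mset$ and $\cal{G}$ are compact, the objective is convex in $X$ and concave (in fact linear) in $(\Mmat,\g)$, and under $\Mset\subset\Sbb_{++}^m$ it is strongly convex in $X$ on bounded sets, a standard minimax theorem lets me exchange the $\min_X$ with $\max_{\Mmat,\g}$. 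Saddle points exist, and I fix a saddle point $(X_\opt,\Mmat_\opt,\g_\opt)$.

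Next I would exploit that $X_\opt$ minimizes the inner objective with $(\Mmat_\opt,\g_\opt)$ held fixed. That inner problem is an unconstrained convex quadratic in $X$, strongly convex because $\Mmat_\opt\succ 0$. The first-order condition $\nabla_X=0$ reads
\begin{equation*}
\cal{D}(\g_\opt) + 2\lambda\, X_\opt\, \Mmat_\opt ~=~ 0,
\end{equation*}
so $X_\opt = -\tfrac{1}{2\lambda}\cal{D}(\g_\opt)\,\Mmat_\opt^{-1}$, where $\Mmat_\opt^{-1}$ exists by the strict positive definiteness assumption.

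Finally, I would substitute the Kronecker representation $\cal{D}(\g_\opt)=\bd{D}(\eye_m\otimes \g_\opt)$ from \eqref{eq:D-linear-kronecker} and invoke the elementary identity
\begin{equation*}
(\eye_m\otimes \g_\opt)\,\Mmat_\opt^{-1} ~=~ \Mmat_\opt^{-1}\otimes \g_\opt,
\end{equation*}
which follows from $(A\otimes u)(B\otimes 1) = (AB)\otimes u$ applied to $A=\eye_m$, $B=\Mmat_\opt^{-1}$, $u=\g_\opt$ (or can be verified directly by comparing blocks). This yields $X_\opt = \bd{D}\,C$ with $C=-\tfrac{1}{2\lambda}\,\Mmat_\opt^{-1}\otimes \g_\opt$, as claimed.

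The only delicate point is the minimax interchange and the invertibility of $\Mmat_\opt$; both are ensured by the hypothesis $\Mset\subset\Sbb_{++}^m$, which makes the quadratic term $\lambda\tr(X\Mmat X^T)$ uniformly strongly convex over $\Mset$ (together with compactness of $\Mset$ and $\cal{G}$ this gives existence of a saddle point by Sion's theorem). Everything else is a direct calculation, and in particular no further structure of $\Lhat$ is needed because $\g_\opt$ is simply frozen when we differentiate in $X$.
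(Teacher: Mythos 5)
Your proposal is correct. The formula $X_\opt = -\tfrac{1}{2\lambda}\cal{D}(\g_\opt)\Mmat_\opt^{-1}$ and the Kronecker manipulation $(\eye_m\otimes\g_\opt)\Mmat_\opt^{-1}=\Mmat_\opt^{-1}\otimes\g_\opt$ match the paper exactly; the difference lies in how that formula is justified. The paper works from the optimality condition of the composite problem, $-\tfrac{1}{\lambda}\cal{D}(\g_\opt)\in\partial\Omega(X_\opt)$, inverts it to $X_\opt\in\partial\Omega^\st(-\tfrac{1}{\lambda}\cal{D}(\g_\opt))$, and then invokes the characterization of $\partial\Omega^\st$ from Proposition~\ref{prop:subdiff-conj} (which under $\Mset\subset\Sbb_{++}^m$ collapses to the single point $\tfrac{1}{2}Y\Mmat_\opt^{-1}$ since $\ker(\Mmat_\opt)=\{0\}$ forces $W=0$). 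You instead freeze $(\Mmat_\opt,\g_\opt)$ at a saddle point of \eqref{eq:loss+omega-var} and differentiate the resulting strongly convex quadratic in $X$, which gives $2\lambda X_\opt\Mmat_\opt+\cal{D}(\g_\opt)=0$ directly. Your route is more elementary and self-contained --- it needs no conjugate-subdifferential machinery, only the existence of a saddle point, which is guaranteed by compactness of $\Mset\times\cal{G}$ and the uniform strong convexity in $X$ coming from $\lambda_{\min}(\Mmat)$ being bounded away from zero on the compact $\Mset\subset\Sbb_{++}^m$ --- while the paper's route exhibits the result as a specialization of its general subdifferential formula, which is the version that would survive if $\Mmat_\opt$ were allowed to be singular. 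Both arguments are sound.
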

\begin{proof}
Denote the optimal solution of \eqref{eq:loss+omega-var} by $(X_\opt,\g_\opt,\Mmat_\opt)\,$, which shares $(X_\opt,\g_\opt)$ with \eqref{eq:saddle-omega}. Consider the optimality condition as $-\tfrac{1}{\lambda}\cal{D}(\g_\opt)\in \partial \Omega(X_\opt)$ which implies 
$
X_\opt \in \partial \Omega^\st (-\tfrac{1}{\lambda}\cal{D}(\g_\opt))$. 
Now, suppose $\Mset \subset \Sbb_{++}^m$ which implies $\Omega_\Mset$ is strongly convex. Considering the characterization of subdifferential for $\Omega^\st$ from Proposition \ref{prop:subdiff-conj} as well as the representation of $\cal{D}(\g)$ in \eqref{eq:D-linear-kronecker} we get 
\begin{align*}
X_\opt =  -\tfrac{1}{2\lambda}\cal{D}(\g_\opt) \Mmat_\opt^{-1} 
= -\tfrac{1}{2\lambda} \bd{D}(\eye_m \otimes \g_\opt) \Mmat_\opt^{-1} 
= -\tfrac{1}{2\lambda} \bd{D}(\Mmat_\opt^{-1} \otimes \g_\opt)  \,.
\end{align*}
\end{proof}

This representer theorem allows us to apply our methods in more general reproducing kernel Hilbert spaces (RKHS) by choosing a problem specific reproducing kernel; e.g., see \cite{representer-2001, ZhouXiaoWu11}.  

\section{Numerical Example}	\label{sec:hier-class}
In this section, we discuss the application of VGFs in hierarchical classification to demonstrate the effectiveness of the presented algorithms in a real data experiment. More specifically, we compare the modified mirror-prox algorithm with adaptive line search presented in Section \ref{sec:mirror-prox} with the variant of Regularized Dual Averaging (RDA) method used in \cite{ZhouXiaoWu11} in the text categorization application discussed in \cite{ZhouXiaoWu11}.

\begin{figure}[t!]
\begin{subfigure}[b]{0.4\textwidth}
			\begin{center}
			\tikzset{
			  treenode/.style = {align=center, inner sep=0pt, text centered, font=\sffamily},
			  arn_n/.style = {treenode, circle, black, font=\sffamily\bfseries, draw=black, text width=1.3em},
			}
			\begin{tikzpicture}[scale=0.8, every node/.style={scale=0.8},
			  level/.style={sibling distance = 2cm/#1, %,>=stealth'
			  level distance = 1.5cm}, 
			  level 1/.style={sibling distance=50pt}, 
			  level 2/.style={sibling distance=50pt}, 
			  edge from parent path={(\tikzparentnode) -- (\tikzchildnode)}] 
			\node [arn_n] (N0) {$0$}
			    child{ node [arn_n] (N1) {$1$} }
			    child{ node [arn_n] (N2) {$2$}
			            child{ node [arn_n] (N3) {$3$}}
			            child{ node [arn_n] (N4) {$4$}}
			    	edge from parent node [->,sloped]  {} 
					}
			; 
			\draw [thick, blue, ->, shorten >=.6em,shorten <=.8em, transform canvas={xshift=.3em,yshift=.3em}] 
				(N0.center) -- (N2.center) node [above, pos=.75, xshift=25pt] {$\x_1^T \ba < \x_2^T\ba$};
			\draw [thick, blue, ->, shorten >=.6em,shorten <=.8em, transform canvas={xshift=-.3em,yshift=.3em}] 
				(N2.center) -- (N3.center) node [above, pos=.75, xshift=-25pt] {$\x_3^T \ba < \x_4^T\ba$};
			\node [blue,right of=N0, xshift=-15pt] {$\ba$};
			\node [left of=N1, xshift=15pt] {$\x_1$};	
			\node [right of=N2, xshift=-13pt] {$\x_2$};	
			\node [left of=N3, xshift=15pt] {$\x_3$};	
			\node [left of=N4, xshift=15pt] {$\x_4$};	
			\node [blue, below of=N3, yshift=15pt] {$b=3$};	
			\end{tikzpicture}
			%---------------------
	\end{center}
	\caption{}
	\label{fig:tree-classification}
\end{subfigure}%
\begin{subfigure}[b]{0.6\textwidth}
	\[
	f(\ba) = \left\{ \begin{array}{l}
	\textbf{initialize}~ i:=0 \\
	\textbf{while}~ \cal{C}(i)~ \mbox{is not empty} \\
	\qquad \displaystyle i:=\argmax_{j\in \cal{C}(i)} ~\x_j^T \ba \\
	\textbf{return}~ i
	\end{array} \right\}
	\]
%	\vspace{1cm}
	\caption{}
	\label{fig:hier-classifier}
\end{subfigure}
\vskip -.1in
\caption{\eqref{fig:tree-classification}: 
An example of hierarchical classification with four class labels
 $\{1,2,3,4\}$. The instance~$\bd{a}$ is classified recursively until 
 it reaches the leaf node~$b=3$, which is its predicted label.
 \eqref{fig:hier-classifier}: 
 Definition of the hierarchical classification function.
 }
\label{fig:hier-example}
\vskip -0.1in
\end{figure}

Let $(\ba_1, b_1), \ldots, (\ba_N, b_N)$ be a set of labeled data
where each $\ba_i\in\R^n$ is a feature vector and the associated
$b_i\in\{1,\ldots,m\}$ is a class label.
The goal of multi-class classification is to learn a classification 
function $f: \R^n \to \{1,\ldots,m\}$ so that, given any sample $\ba\in\R^n$ 
(not necessarily in the training set), the prediction $f(\ba)$ 
attains a small classification error compared with the true label.

In hierarchical classification, the class labels 
$\{1,\ldots,m\}$ are organized in a category tree, where the root 
of the tree is given the fictious label~$0$
(see Figure~\ref{fig:tree-classification}).
For each node $i\in\{0,1,\ldots,m\}$, 
let $\cal{C}(i)$ be the set of children of~$i$, 
$\cal{S}(i)$ be the set of siblings of~$i$,
and $\cal{A}(i)$ be the set of ancestors of~$i$
excluding~$0$ but including itself.
A hierarchical linear classifier $f(\ba)$ is defined in 
Figure~\ref{fig:hier-classifier}, which is parameterized
by the vectors $\x_1,\ldots,\x_m$ through a recursive procedure.
In other words, an instance is labeled sequentially by choosing the category 
for which the associated vector outputs the largest score among its siblings,
until a leaf node is reached.
An example of this recursive procedure is shown in 
Figure~\ref{fig:tree-classification}.
For the hierarchical classifier defined above, 
given an example $\ba_s$ with label~$b_s$, 
a correct prediction made by $f(\ba)$ implies 
that~\eqref{eq:multiclass-constraints} holds with
\[
  \cal{I}(k) = \bigl\{(i,j) \,:\, j\in\cal{S}(i), ~i\in\cal{A}(k)\bigr\} .
\]
Given a set of examples $(\ba_1, b_1), \ldots, (\ba_N, b_N)$, 
we can train a hierarchical classifier parametrized by 
$X=[\x_1,\ldots,\x_m]$ by solving the problem 
$\min_X \bigl\{\Loss(X) + \lambda\Omega(X)\bigr\}$,
with the loss function $\Loss(X)$ defined in~\eqref{eq:multiclass-hinge} and an appropriate VGF penalty function $\Omega(X)$.
As discussed in Section~\ref{sec:opt-algm}, the training optimization problem can be reformulated as a convex-concave saddle point problem of the form~\eqref{eq:loss+omega-var} and solved by the mirror-prox algorithm described in Section~\ref{sec:mirror-prox}. In addition, we can use the reduction procedure discussed in Section~\ref{sec:reduced-form} to reduce computational cost.

As discussed in \cite{ZhouXiaoWu11}, one can assume a model where classification at different levels of the hierarchy rely on different features or different combination of features. Therefore, authors in \cite{ZhouXiaoWu11} proposed regularization with $\abs{\x_i^T\x_j}$ whenever $j\in\mathcal{A}(i)\,$. A convex formulation of such a regularization function can be given in the form \eqref{eq:def-L1Gram} with 
\begin{align*}%\label{eq:Mbox-fixeddiag}
\Mset = \bigl\{\Mmat: \;  \Mmat_{ii} = \Mbar_{ii}  \;,\; \abs{\Mmat_{ij}} = \abs{\Mbar_{ij}}   \bigr\}	
\end{align*}
where the nonzero pattern of $\Mbar$ corresponds to the pairs of ancestor-descendant nodes. According to \eqref{eq:eigen-Meff-L1}, we have $\Mset \subset \Sbb_+^m$ provided that $\lambda_{\min}(\Mcomp) \geq 0\,$; see Figure \ref{fig:boxVGF}. 

As a real-world example, 
we consider the classification dataset Reuters Corpus Volume I, 
RCV1-v2 \cite{lewis2004rcv1}, which is an archive of over 800,000 manually 
categorized newswire stories and is available in libSVM. 
A subset of the hierarchy of labels in RCV1-v2, with $m=23$ labels (18 leaves), is called ECAT and is used in our experiments. 
The samples and the classifiers are of dimension $n = 47236\,$. 
Lastly, there are 2196 training, and 69160 test samples available.

We solve the same loss-regularized problem as in \cite{ZhouXiaoWu11}, but using mirror-prox (discussed in Section \ref{sec:mirror-prox}) instead of regularized dual averaging (RDA). The regularization function is a VGF and is given in \eqref{eq:def-L1Gram}. A reformulation of the whole problem as a smooth convex-concave problem is given in \eqref{eq:hinge-divnorm-saddle}. To obtain comparable results, we use the same matrix $\Mbar$ and regularization parameter $\lambda=1$ as in \cite{ZhouXiaoWu11}. 
Note that in this experiment, $n=47236$ while $m=23$ and $p>2196$, so the kernel trick is not particularly useful since $n$ is not larger than $mp\,$.

{
Since we are solving the same problem as \cite{ZhouXiaoWu11}, the prediction error on test data 
will be the same as the error reported in this reference, which is better than the other methods. Moreover, one can look at the estimated classifiers and how well they validate the orthogonality assumption. Figure \ref{fig:heatmaps-MCAT} compares the pairwise inner products of classifiers estimated by our approach for hierarchical classification and those estimated by ``transfer'' method (see \cite{ZhouXiaoWu11} for details on this method).}
%-------------------------------------------- heat maps
\newcommand\gray{gray}
\newcommand\ColCell[1]{%
%  \pgfmathparse{#1<.8?1:0}%
%    \ifnum\pgfmathresult=0\relax\color{white}\fi
  \pgfmathparse{mod(2*#1,2)?0:1}%
    \ifnum\pgfmathresult=0\relax\bf\leavevmode\color{red}\!\!\fi
  \pgfmathparse{int(round(#1))}
  	\edef\inp{\pgfmathresult}
  \pgfmathparse{1-.8*sqrt(abs(2*\inp/180-1))}%
  \expandafter\cellcolor\expandafter[\expandafter\gray\expandafter]\expandafter{\pgfmathresult}\inp\!}

\newcolumntype{P}[1]{>{\centering\arraybackslash}p{#1}}
\newcolumntype{E}{>{\collectcell\ColCell}P{15pt}<{\endcollectcell}}
\begin{figure}[ht]
\begin{center}
\resizebox{.32\columnwidth}{!}
{
	\renewcommand{\arraystretch}{2.3}
	\noindent\begin{tabular}{*{25}{E}}
	0&124&101&100&84&94&94&89&90&92&96&90.1&89&92\\
	124&0&112&118&91&89&89&91&88&93&92&90.1&90&91\\
	101&112&0&98&94&86&89&90&93&83&78&90.1&92&85\\
	100&118&98&0&92&90&87&90&90&89&91&90.1&89&90\\
	84&91&94&92&0&141&130&89&87&98&110&91&90.1&99\\
	94&89&86&90&141&0&89&91&92&84&77&89&90.1&85\\
	94&89&89&87&130&89&0&90&93&84&74&91&90.1&83\\
	89&91&90&90&89&91&90&0&153&97&91&90&90&90.1\\
	90&88&93&90&87&92&93&153&0&111&104&91&90&90.1\\
	92&93&83&89&98&84&84&97&111&0&56&89&91&90.1\\
	96&92&78&91&110&77&74&91&104&56&0&96&96&71\\
	90.1&90.1&90.1&90.1&91&89&91&90&91&89&96&0&145&105\\
	89&90&92&89&90.1&90.1&90.1&90&90&91&96&145&0&110\\
	92&91&85&90&99&85&83&90.1&90.1&90.1&71&105&110&0\\
	\end{tabular}	
}
~~~
\resizebox{.32\columnwidth}{!}
{
	\renewcommand{\arraystretch}{2.3}
	\noindent\begin{tabular}{*{25}{E}}
	0&124&104&103&85&95&92&91&90&89&98&87.1&90&93\\
	124&0&108&113&91&90&89&89&90&91&91&89.1&91&90\\
	104&108&0&101&93&86&91&91&89&90&82&95.1&89&88\\
	103&113&101&0&92&89&88&88&92&90&87&90.1&91&89\\
	85&91&93&92&0&140&127&91&91&88&104&94&80.1&96\\
	95&90&86&89&140&0&93&89&90&92&82&89&95.1&86\\
	92&89&91&88&127&93&0&89&90&92&79&84&99.1&86\\
	91&89&91&88&91&89&89&0&146&100&89&90&90&91.1\\
	90&90&89&92&91&90&90&146&0&114&97&92&92&81.1\\
	89&91&90&90&88&92&92&100&114&0&80&87&86&105.1\\
	98&91&82&87&104&82&79&89&97&80&0&92&103&83\\
	87.1&89.1&95.1&90.1&94&89&84&90&92&87&92&0&142&102\\
	90&91&89&91&80.1&95.1&99.1&90&92&86&103&142&0&114\\
	93&90&88&89&96&86&86&91.1&81.1&105.1&83&102&114&0\\
	\end{tabular}	
}	
	
\end{center}
\caption{{Pairwise angles (in degrees) between the estimated classifiers for dataset MCAT (part of RCV1-v2 \cite{lewis2004rcv1}) via (left) regularization by the VGF in \eqref{eq:def-L1Gram} and (right) the ``transfer'' method (see \cite{ZhouXiaoWu11} and references therein). 
The boldface entries in red correspond to ancestor-descendant relations in the hierarchy of MCAT labels.}}
\label{fig:heatmaps-MCAT}
\end{figure}

In the setup of the mirror-prox algorithm, we use $\frac{1}{2}\nor_2^2$ as the mirror map which requires the least knowledge about the optimization problem (see \cite{JuditskyNemirovski11chapter6} for the requirements when combining a number of mirror maps corresponding to different constraint sets in the saddle point optimization problem). 
With this mirror map, the steps of mirror-prox only require orthogonal projection onto $\cal{G}$ and $\Mset\,$. 
The projection onto $\cal{G}$ in \eqref{eq:Gc-hinge} boils down to separate projections onto $N$ scaled 
simplexes (where the summation of entries is bounded by 1 and not necessarily equal to 1). Each projection amounts to zeroing out the negative entries followed by a projection onto the $\ell_1$ unit norm ball (e.g., using the simple process described in \cite{duchi2008efficient}).

The variant of RDA proposed in \cite{ZhouXiaoWu11} has a convergence rate of $O(\ln(t)/\sigma t)$ for the objective value, where $\sigma$ is the strong convexity parameter of the objective. On the other hand, mirror-prox enjoys a convergence rate of $O(1/t)$ as given in \cite{nemirovski2004prox}.  
Although there is a clear advantage to the MP method compared to RDA in terms of the theoretical guarantee, one should be aware of the difference between the notions of gap for the two methods. 
Figure \ref{fig:rate-compare} compares $\norm{X_t - X_{\text{final}}}_F$ for MP and RDA using each one's own final estimate $X_{\text{final}}\,$. 
In terms of the runtime, we empirically observe that each iteration of  MP takes about 3 times more time compared to RDA. However, as evident from Figure \ref{fig:rate-compare}, MP is still much faster 
in generating a fixed-accuracy solution. 
Figure \ref{fig:mp-rate} illustrates the decay in the value of the gap for mirror-prox method, $V_{z_t}(z_{t+1})\,$, which confirms the theoretical convergence rate of $O(1/t)$.
\newcommand{\subsamplerate}{10}
\pgfplotsset{
    convergenceplot/.style = {
        minor x tick num=1,
        xtick pos=left,
        ytick pos=left,
        enlarge x limits=false,
        every x tick/.style={color=black, thin},
        every y tick/.style={color=black, thin},
        tick align=outside,
        xlabel near ticks,
        ylabel near ticks,
        axis on top,        
	  	no markers, 
	  	very thick,
		legend style={
		                    draw=none, % ?
		                    text depth=0pt,
		                    anchor=north east,
		                    legend columns=-1,
		                    % default spacing:
		                    column sep=1cm,
		                    % The text "Legend:"
		                    /tikz/column 2/.style={column sep=0pt},%,font=\bfseries},
		                    %
		                    % the space between legend image and text:
		                    /tikz/every odd column/.append style={column sep=0cm},
		                	},
    }
}
\begin{figure}[ht]
     \begin{subfigure}[b]{0.32\textwidth}
          \centering
          \resizebox{\linewidth}{!}{
\begin{tikzpicture}%[scale=\scl]
	\begin{axis}[convergenceplot, 
		xmin = 100, xmax = 3000, xtick = {1000,2000,3000}, ytick = {2, 0,-2,-4},
		each nth point=\subsamplerate, filter discard warning=false, unbounded coords=discard]
	\addplot [color=blue!80] table [y=M, x=t]{FIG1.txt} node [pos=0.8,pin={45:MP},inner sep=0pt] {}; 
	\addplot table [y=R, x=t]{FIG1.txt} node [pos=0.8,pin={-135:RDA},inner sep=0pt] {}; 
	\end{axis}
\end{tikzpicture}          
}
          \caption{}
          \label{fig:rate-compare} % left
     \end{subfigure}
     \begin{subfigure}[b]{0.32\textwidth}
          \centering
          \resizebox{\linewidth}{!}{
\begin{tikzpicture}%[scale=\scl]
	\begin{axis}[convergenceplot, 
		xmin = 100, xmax = 3000, xtick = {1000,2000,3000}, ytick = {2, 0,-2,-4},
		each nth point=\subsamplerate, filter discard warning=false, unbounded coords=discard]
	\addplot [color=blue!80] table [y=M, x=t]{FIG2.txt};
	\end{axis}
\end{tikzpicture}      
}    
          \caption{}
          \label{fig:mp-rate} % middle
     \end{subfigure}
     \begin{subfigure}[b]{0.32\textwidth}
          \centering
          \resizebox{\linewidth}{!}{
\begin{tikzpicture}%[scale=\scl]
	\begin{axis}[convergenceplot, 
		xmin = 100, xmax = 3000, xtick = {1000,2000,3000}, ytick = {2, 0,-2,-4},
		each nth point=\subsamplerate, filter discard warning=false, unbounded coords=discard]
	\addplot [color=blue!80] table [y=M, x=t]{FIG3.txt} node [pos=0.8,pin={45:MP},inner sep=0pt] {}; 
	\addplot table [y=R, x=t]{FIG3.txt} node [pos=0.8,pin={-135:RDA},inner sep=0pt] {}; 
	\end{axis}
\end{tikzpicture}          
}
          \caption{}
          \label{fig:loss-compare} 
     \end{subfigure} 
     \vskip -.17in
    \caption{Convergence behavior for mirror-prox and RDA in our numerical experiment. 
        (\protect\subref{fig:rate-compare}) Average error over the $m$ classifiers between each iteration and the final estimate, $\norm{X_t - X_{\text{final}}}_F\,$. 
        (\protect\subref{fig:mp-rate}) MP's gap $V_{z_t}(z_{t+1})$. 
        (\protect\subref{fig:loss-compare}) The value of loss function relative to the final value. 
        For visualization purposes, all of the plots show data points at every 10 iterations. All vertical axes have a logarithmic scale. 
    }
    \label{fig:convergence-rates}      
 \end{figure}

\section{Discussion}
In this paper, we introduce variational Gram functions, which include many existing regularization functions as well as important new ones. Convexity properties of this class, conjugate functions, subdifferentials, semidefinite representability, proximal operators, and other convex analysis properties are studied. 
By exploiting the structure in loss and the regularizer, namely $\Loss(X) = \Lhat^\st(\cal{D}^\st(X))$ and $\Omega_\Mset(X) = S_\Mset(X^TX)$, we provide various tools and insight into such regularized loss minimization problems: By adapting the mirror-prox method \cite{nemirovski2004prox}, we provide a general and efficient optimization algorithm for VGF-regularized loss minimization problems. We establish a general kernel trick and a representer theorem for such problems. Finally, the effectiveness of VGF regularization as well as the efficiency of our optimization approach is illustrated by a numerical example on hierarchical classification for text categorization.

There are numerous directions for future research on this class of functions. 
One issue to address is how to systematically pick an appropriate set $\Mset$ when defining a new VGF for some new application. 
Statistical properties of VGFs, for example the corresponding sample complexity, are of interest from a learning theory perspective. 
The presented kernel trick 
(which uses the left unitarily invariance property of VGFs) 
can be potentially extended to other invariant regularizers. 
And last but not least, it is interesting to see if there is a variational Gram representation for any squared left unitarily invariant norm.

\bibliographystyle{siam}
\bibliography{VGFbib}

\appendix
\section{Additional Proofs} 
\begin{proof}[Proof of Lemma \ref{lem:homog-norm}]
First, assume that $\Omega$ is convex. By plugging in $X$ and $-X$ in the definition of convexity for $\Omega$ we get $\Omega(X)\geq 0\,$, so the square root is well-defined. 
We show the triangle inequality $\sqrt{\Omega(X+Y)}\leq \sqrt{\Omega(X)} + \sqrt{\Omega(Y)}\,$ holds for any $X,Y$.  
If $\Omega(X+Y)$ is zero, the inequality is trivial. 
Otherwise, for any $\theta\in(0,1)$ let $A = \tfrac{1}{\theta}X$, $B= \tfrac{1}{1-\theta}Y$, and use the
convexity and second-order homogeneity of $\Omega$ to get
\begin{align}	\label{eq:quad-cvx}
\Omega(X+Y) = \Omega(\theta A + (1-\theta)B) \leq \theta \Omega(A) + (1-\theta )\Omega(B) = 
\tfrac{1}{\theta}\Omega(X)+ \tfrac{1}{1-\theta}\Omega(Y).
\end{align}
If $\Omega(X) \geq \Omega(Y)=0\,$, set $\theta = (\Omega(X)+\Omega(X+Y))/(2\Omega(X+Y)) >0$. 
Notice that $\theta\geq 1$ provides $\Omega(X) \geq \Omega(X+Y)$ as desired. On the other hand, if $\theta<1\,$, we can use it in \eqref{eq:quad-cvx} to get the desired result as 
\[
\Omega(X+Y) \leq \tfrac{1}{\theta}\,\Omega(X) 
= \frac{2\Omega(X+Y)\Omega(X)}{\Omega(X+Y)+\Omega(X)}
\implies 
\Omega(X) \geq \Omega(X+Y) \,.
\]
And if $\Omega(X),\Omega(Y) \neq 0\,$, set $\theta = \sqrt{\Omega(X)}/(\sqrt{\Omega(X)}+\sqrt{\Omega(Y)}) \in (0,1)$ to get
\[
\Omega(X+Y) \leq \tfrac{1}{\theta}\,\Omega(X)+ \tfrac{1}{1-\theta}\,\Omega(Y)
= (\sqrt{\Omega(X)}+\sqrt{\Omega(Y)})^2 \,.
\]
Since $\sqrt{\Omega}$ satisfies the triangle inequality and absolute homogeneity, it is a semi-norm. Notice that 
$\Omega(X)=0$ does not necessarily imply $X=0$, unless $\Omega$ is strictly convex.  

Now, suppose that $\sqrt{\Omega}$ is a semi-norm; hence convex. 
The function $f$ defined by $f(x)=x^2$ for $x\geq 0$ and $f(x)=0$ for $x\leq 0$ is non-decreasing, so the 
composition of these two functions is convex and equal to $\Omega\,$. 
It is worth mentioning that one can alternatively use Corollary 15.3.2 of \cite{Rockafellar70book} to prove the first part of the lemma. 
\end{proof}

\begin{lemma}\label{lem:Psi-convex-via-norms}
Consider any norm $\nor\,$. Then, $\norm{\abs{\cdot}}$ is a norm itself if and only if we have $\norm{\abs{x}} = \min_{y\geq\abs{x}}\; \norm{y}\,$. 
\end{lemma}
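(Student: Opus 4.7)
The plan is to apply Lemma \ref{lem:inf-absolute} with $f = \nor$, setting $h(x) = \norm{\abs{x}}$ and $g(x) = \min_{y\geq\abs{x}}\norm{y}$. Since $\nor$ is convex, both parts of that lemma are available: part~(a) gives $h^{**} \leq g \leq h$, and part~(b) gives that $g$ is convex with $g = h^{**}$.

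For the forward direction $(\Rightarrow)$, assume $\norm{\abs{\cdot}}$ is a norm. Then $h$ is convex and lower semi-continuous, so $h = h^{**}$ by the standard Fenchel--Moreau theorem (\cite[Theorem~12.2]{Rockafellar70book}). Combined with part~(a) of Lemma \ref{lem:inf-absolute}, this squeezes $h \leq g \leq h$, yielding $g = h$, which is the desired identity.

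For the reverse direction $(\Leftarrow)$, assume $h = g$. Since $\nor$ is convex, part~(b) of Lemma \ref{lem:inf-absolute} applies and tells us $g$ is convex; hence $h$ is convex. The remaining norm axioms are then easy to verify directly from the definition of $h$: absolute homogeneity follows from $\norm{\abs{\lambda x}} = \norm{|\lambda|\abs{x}} = |\lambda|\norm{\abs{x}}$, and positive definiteness follows from $\norm{\abs{x}} = 0 \iff \abs{x} = 0 \iff x = 0$ since $\nor$ is a norm. Finally, convexity plus absolute homogeneity of order $1$ implies the triangle inequality via $h(x+y) = 2\, h\bigl(\tfrac{x+y}{2}\bigr) \leq h(x) + h(y)$, so $h$ is indeed a norm.

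No single step appears to be a serious obstacle; the main care needed is bookkeeping the biconjugate. The only subtle point is recognizing that $h$ is automatically closed (as it is finite-valued and convex on $\R^n$), so $h = h^{**}$ in the forward direction without extra work. Everything else is a direct consequence of already-established properties of Lemma \ref{lem:inf-absolute}.
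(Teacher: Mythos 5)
Your proof is correct, but it takes a genuinely different route from the paper's. You recycle the biconjugate machinery of Lemma~\ref{lem:inf-absolute}: the forward direction follows from the squeeze $h = h^{**} \leq g \leq h$ (valid since a finite-valued convex $h$ is automatically closed), and the reverse direction extracts convexity of $h$ from part~(b) and then checks the remaining norm axioms by hand. This is essentially the argument of Corollary~\ref{cor:Psi-convex} specialized to norms, plus the axiom bookkeeping. The paper instead gives a self-contained elementary argument: for the forward direction it invokes the classical fact that an absolute norm is monotonic (so $\norm{y}\geq\norm{\abs{x}}$ for all $y\geq\abs{x}$, and $y=\abs{x}$ is feasible), and for the reverse direction it verifies directly that $g(x)=\min_{y\geq\abs{x}}\norm{y}$ satisfies the triangle inequality, via the identity $\min_{y\geq\abs{x}+\abs{z}}\norm{y}=\min_{y_1\geq\abs{x},\,y_2\geq\abs{z}}\norm{y_1+y_2}$. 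Your approach buys economy — it avoids the (nontrivial, and in the paper somewhat glossed-over) monotonicity theorem for absolute norms and reuses an already-proved lemma — while the paper's buys transparency and independence from conjugacy arguments; it also directly exhibits why $g$ is a norm rather than inferring convexity of $h$ and rechecking axioms. Both are complete.
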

\begin{proof}[Proof of Lemma \ref{lem:Psi-convex-via-norms}]
First, suppose $\nor_a := \norm{\abs{\cdot}}$ is a norm; hence it is an absolute norm and is monotonic as well by definition. Therefore, for any $y\geq \abs{x}$ we have $\norm{y}_a \geq \norm{x}_a$ which gives $\min_{y\geq\abs{x}}\; \norm{y}_a \geq \norm{x}_a\,$. Since $\abs{x}$ is feasible in this optimization, and $\norm{\abs{x}}_a = \norm{x}_a$ we get the desired result; $\norm{\abs{x}} = \norm{x}_a =\min_{y\geq\abs{x}}\; \norm{y}\,$.

On the other hand, consider $f(\cdot) := \min_{y\geq\abs{x}} \norm{y}$. We show that it is a norm. Clearly, $f$ is nonnegative and homogenous, and $f(x)=0$ implies that $\norm{y}=0$ for some $y\geq \abs{x}\geq 0$ which implies $x=0\,$. The triangle inequality can be verified as, 
\begin{align*}
f(x+z) 
= \min_{y\geq \abs{x+z}} \; \norm{y}
\leq \min_{y\geq \abs{x}+\abs{z}} \; \norm{y} 
= \min_{y_1\geq \abs{x} \;,\; y_2\geq \abs{z}} \; \norm{y_1+y_2}   \qquad\qquad\\
\leq \min_{y_1\geq \abs{x} \;,\; y_2\geq \abs{z}} \; \norm{y_1}+\norm{y_2}
= f(x)+f(z) \,.
\end{align*}
\end{proof}

\end{document}